\newtheorem{prop}{Proposition}[section]
\newtheorem{theo}[prop]{Theorem}
\newtheorem{lemm}[prop]{Lemma}
\newtheorem{remark}[prop]{Remark}
\newtheorem{definition}[prop]{Definition}
\theoremstyle{definition}
\newcommand{\eqco}{\setcounter{equation}{0}}
\newcommand{\allco}{\eqco}
\newcommand{\lbl}{\label}
\newcommand{\Po}{{\cal P}}
\newcommand{\cQ}{{\cal Q}}
\newcommand{\cR}{{\cal R}}
\newcommand{\Q}{{\end{document}}}
\newcommand{\Gum}{{\mathsf{Gu}}}
\renewcommand{\d}{d}
\newcommand{\N}{\mathbb{N}}
\newcommand{\Z}{\mathbb{Z}}
\newcommand{\bH}{\mathbb{H}}
\newcommand{\R}{\mathbb{R}}
\newcommand{\E}{\mathbb{E}}
\newcommand{\EE}{\mathbb{E}}
\newcommand{\X}{\mathcal{X}}
\newcommand{\cX}{\mathcal{X}}
\renewcommand{\Pr}{\mathbb{P}}
\newcommand{\PP}{\mathbb{P}}
\newcommand{\ee}{e}
\renewcommand{\E}{\mathbb E \,}
\newcommand{\tod}{\stackrel{{\cal D}}{\longrightarrow}}
\newcommand{\Y}{{\cal Y}}
\newcommand{\cF}{{\cal F}}
\newcommand{\eps}{\varepsilon}
\newcommand{\edm}{\end{displaymath}}
\def\benu{\begin{enumerate}}
\def\eenu{\end{enumerate}}
\def\beqn{\begin{equation}}
\def\eeqn{\end{equation}}
\def\bea{\begin{eqnarray}}
\def\eea{\end{eqnarray}}
\newcommand{\bean}{\begin{eqnarray*}}
\newcommand{\eean}{\end{eqnarray*}}
\newcommand{\bear}{\begin{eqnarray}}
\newcommand{\eear}{\end{eqnarray}}
\newcommand{\1}{{\bf 1}}
\renewcommand{\epsilon}{\varepsilon}
\newcommand{\bbS}{\mathbb{S}}
\renewcommand{\P}{{\mathbb P}}
\newcommand{\dist}{\,{\rm dist}}
\def\qed{\hfill\hbox{${\vcenter{\vbox{
    \hrule height 0.4pt\hbox{\vrule width 0.4pt height 6pt
    \kern5pt\vrule width 0.4pt}\hrule height 0.4pt}}}$}}
\begin{document}
\title{\bf Covering one point process with another}

\author{
	Frankie Higgs$^{1,4}$,
	Mathew D. Penrose$^{2,4}$ and Xiaochuan Yang$^{3,4}$  \\
}

 \footnotetext{ $~^1$ Department of
Mathematical Sciences, University of Bath, Bath BA2 7AY, United
Kingdom. {\texttt fh350@bath.ac.uk} }

\footnotetext{ $~^2$ Department of
Mathematical Sciences, University of Bath, Bath BA2 7AY, United
Kingdom. {\texttt m.d.penrose@bath.ac.uk} }

\footnotetext{ $~^3$
	     Department of Mathematics, Brunel University London, Uxbridge, UB83PH, United Kingdom.
             \texttt{xiaochuan.yang@brunel.ac.uk
            ORCID:0000-0003-2435-4615}}
             

 \footnotetext{ $~^4$ Supported by EPSRC grant EP/T028653/1 }

 \footnotetext{
	 MSC: 
	 60D05, 
 60F05, 
 60F15.} 

 \footnotetext{Key words and phrases: coverage threshold, weak limit, 
  Poisson point process.}




\maketitle

\begin{abstract}
	Let $X_1,X_2, \ldots $ and $Y_1, Y_2, \ldots$
	be i.i.d. random uniform points in a bounded domain 
	$A \subset \mathbb{R}^2$ with smooth or 
	polygonal boundary.
 Given $n,m,k \in \N$,
	define the {\em two-sample $k$-coverage threshold} $R_{n,m,k}$ to
	be the smallest $r$ such that each point of $ \{Y_1,\ldots,Y_m\}$
	 is covered at least $k$ times 
	 by the disks of radius $r$ centred on $X_1,\ldots,X_n$.
	 We obtain the limiting distribution of $R_{n,m,k}$ as
	 $n \to \infty$ with $m= m(n) \sim \tau n$ for
	 some constant $\tau >0$, with $k $ fixed.
	 If $A$ has unit area,
	 then $n \pi R_{n,m(n),1}^2 - \log n$ is asymptotically Gumbel
	 distributed with scale parameter $1$ and location
	 parameter $\log \tau$. For $k >2$, we find  
	  that $n \pi R_{n,m(n),k}^2 - \log n - (2k-3) \log \log n$
	 is asymptotically Gumbel with  scale parameter $2$
	 and a more complicated location parameter involving the perimeter
	 of $A$; boundary effects dominate when
	 $k >2$.  For $k=2$ the limiting cdf is a two-component
	 extreme value distribution
	 with scale  parameters 1 and 2.
	 We also give analogous results for higher dimensions,
	 where the boundary effects dominate for all $k$.


\end{abstract}


\section{Introduction}
\label{SecIntro}

This paper is primarily concerned with the following 
{\em two-sample random coverage} problem. Given a 
specified compact region $B$ in a $d$-dimensional Euclidean space, 
suppose $m$ points $Y_j$ are placed randomly in $B$.
What is the probability that these $m$ points are fully covered by a union
of Euclidean  balls of radius $r$ centred on $n$ points $X_i$ placed
independently uniformly at random in $B$, in the large-$n$ limit
with $m= m(n)$ becoming large and
 $r =r(n)$ becoming  small in an appropriate manner?


In an  alternative version of this question,
 the $X$-points are placed uniformly not
in $B$, but in a larger region $A$ with $B \subset A^o$
($A^o$ denotes the interior of $A$).
This version is simpler because boundary effects are avoided.
We consider this version too.

We shall express our results in terms of the
 {\em two-sample coverage threshold} $R_{n,m}$, which we
define to be the smallest radius of balls, centred on a
 set $\X_n$ of $n$ independent uniform random points in $A$,
 required to cover all the points of a sample
 $\Y_m$ of $m$ uniform random points in $B$.
More generally, for $k \in \N$
 the {\em two-sample $k$-coverage threshold} $R_{n,m,k}$
is the smallest radius required to cover $\Y_m$ $k$ times. 
These thresholds are random variables, because the locations of
the centres are random.
We investigate their probabilistic behaviour 
as $n$ and $m$ become large.

A related  question is to ask for
coverage of the whole set $B$, not just of the
point set $\Y_m$. We refer here to the smallest
radius $r$ such that $B$ is contained in the union
of the balls of radius $r$ centred on points of $A$,
as the {\em complete coverage threshold}.
The asymptotic behaviour of this threshold
has been
addressed
in
\cite{HallZW} and 
\cite{Janson} (for the case with $B \subset A^o$)
and in \cite{Pen22} (for the case with $B=A$).
Clearly $R_{n,m}$ provides a lower bound for the 
complete coverage threshold.

Also related is the problem, when $m=n$ and $B=A$, of 
finding the {\em matching threshold}, that is, the
minimum $r$ such that a perfect bipartite matching of the samples
$\X_n$ and $\Y_n$ exists with all edges of length at most $r$.
%
This
problem has been considered in \cite{Leighton,ShorY}, with
applications to the theory of empirical measures.
See e.g. \cite{GTS} for recent application of results
in \cite{Leighton,ShorY} to clustering and classification problems
in machine-learning algorithms.

Our problem
is different since we allow the 
$X$-points to practice polygamy, and require all of the $Y$-points, but not
necessarily all of the $X$-points, to be matched.
Clearly $R_{n,n}$ is a lower bound for the matching threshold.
This lower bound is asymptotically of a different order of magnitude than the
matching threshold when $d=2$, 
but the same order of magnitude
when $d \geq 3$. A slightly better lower bound is given by
$\tilde{R}_{n,n}$, which we define to be the smallest
$r$ such that all $Y$-points are covered by $X$-points {\em and}
all $X$-points are covered by $Y$-points.
We  expect that our methods can be used to show that
$\lim_{n \to \infty} \Pr[\tilde{R}_{n,n} \leq r_n]
= \lim_{n \to \infty} \Pr[R_{n,n} \leq r_n]^2$ for any sequence
$(r_n)$ such that the limit exists, but proving this is beyond the
scope of this paper.
It is tempting to conjecture that 
the lower bound $\tilde{R}_{n,n}$
for the matching threshold
might perhaps be asymptotically sharp
as $n \to \infty$ in sufficiently
high dimensions.

Another related problem is that of understanding the
{\em bipartite connectivity threshold.} Given $\X_n, \Y_m$ and
$r>0$,
we can create a {\em bipartite random geometric graph}
(BRGG)
on vertex set $\X_n \cup \Y_m$ by drawing an edge between
any pair of points $x \in \X_n, y \in \Y_m$ a distance
at most $r$ apart. The bipartite  connectivity threshold is the
smallest $r$ such that this graph is connected, and
the two-sample coverage threshold $R_{n,m}$  is a lower bound for
the bipartite connectivity threshold. Two related thresholds
are:
the smallest $r$ such that each point of 
 $\Y_m$
is connected  by a path in the BRGG to at least one other 
point of
 $\Y_m$,
and
the smallest $r$ such that any two points of 
 $\Y_m$,
are connected  by a path in the BRGG
(but isolated points
in $\X_n$ are allowed in both cases). Provided $m \geq 2$, these thresholds
both lie
between $R_{n,m}$ and the bipartite connectivity threshold,
and have been studied in \cite{IY,PAB}.

Motivation for considering coverage problems comes from
wireless communications technology
(among other things); one may be interested in
covering  a region of land by
mobile wireless transmitters (with locations modelled as
the set of random points $X_i$). If interested in covering
the whole region of land, one needs to consider the
complete coverage threshold.
In practice, however,
it may be sufficient to cover not the whole region
but a finite collection of receivers placed in that region (with locations
modelled as the set of random points $Y_j$), and the
two-sample coverage threshold addresses this problem.
See \cite{IY} for further discussion of motivation from
wireless communications.

See also \cite{BI13}, which discusses a similar model where
the $\Y$-sample represents
a set of `sensors' which cover space over short distances, and 
the $\cX$-sample represents a set of `backbone nodes' which communicate
over longer distances.
In \cite{BI13}
the interest is in the volume of the region of space that is covered 
by sensors that are themselves covered by backbone nodes; a central
limit theorem is derived for the volume of the complementary 
region. The quantity of interest to us  here corresponds to
the probability that
all of the sensors are covered (at least $k$ times) by backbone nodes.


We shall determine the limiting behaviour of $\Pr[R_{n,m(n),k} \leq r_n]$
for any fixed $k$, any 
sequence $m(n)_{n \geq 1} $ of integers asymptotically proportional to $n$,
and
any sequence  of numbers $(r_n)$ such that the limit exists,
for the case where $B$ is smoothly bounded (for general $d \geq 2$)
or where
$B$ is a polygon (for $d =2$). 
We also obtain similar  results for  the Poissonized versions of this problem.

Our results show that when $d \geq 3$ the boundary effects dominate,
i.e. the point of the $\Y$-sample furthest from its $k$-nearest neighbour
in the $\X$-sample is likely to be near the boundary of $B$.
When $d=2$, boundary effects are negligible for $k=1$  but
dominate for $k \geq 3$. When $d=k=2$ the boundary and interior 
effects are of comparable importance; the point of the $\Y$-sample
furthest from its second-nearest neighbour in the $\X$-sample
has non-vanishing probability of being near the boundary of $B$ but also
 non-vanishing probability of being in the interior.



In Section~\ref{sim-section} we discuss the results of computer experiments, in which we sampled many independent copies of $R_{n,m(n),k}$ and plotted the estimated distributions of these radii (suitably transformed so that a weak law holds) alongside the limiting distributions we state in Section~\ref{secweak}.
These experiments motivated a refinement to our limit results, in which we explicitly included the leading-order error term, so that we can approximate the distribution of $R_{n,m(n),k}$ well for given finite $n$.

We work within the following mathematical framework.  Let $d \in \N$.
 Let $A \subset \R^d$ be compact.
Let $B \subset A$ be a specified Borel set (possibly
the set $A$ itself) with a nice boundary
(in a sense to be made precise later on), and with
volume $|B|>0$.
Suppose on some probability space $(\bbS,\cF,\Pr)$ that
$X_1,Y_1,X_2,Y_2,\ldots$ are independent random $d$-vectors 
with $X_i$ uniformly distributed over $A$ and $Y_i$
uniformly distributed over $B$ for each $i \in \N$.
For $x \in \R^d$ and $r>0$ set $B_r(x) :=
B(x,r):= \{y \in \R^d:\|y-x\| \leq r\}$
where $\|\cdot\|$ denotes the Euclidean norm.
For $n \in \N$,  let $\X_n:= \{X_1,\ldots,X_n\}$ and
let $\Y_{n,B}:= \{Y_1,\ldots,Y_n\}$.
Given also $m, k \in \N$, we
 define the $k$-coverage threshold $R_{n,m,k}$  by
\bea
R_{n,m,k}(B) : =
 \inf \left\{ r >0: \X_n   (B(y,r)) \geq k 
~~~~ \forall y \in \Y_{m,B} \right\},
~~~ n,m,k  \in \N,
\label{Rnkdef}
\eea 
where for any point set $\X \subset \R^d$ and any $D \subset \R^d$ we write
$\X(D)$ for the number of points of $\X$ in $D$,
 and we use the convention $\inf\{\} := +\infty$.
In particular $R_{n,m}(B) : = R_{n,m,1}(B)$ is the two-sample
coverage threshold.  Observe that
$R_{n,m}(B)  = \inf \{ r >0: \Y_{m,B} \subset \cup_{i=1}^n B(X_i,r) \}$.

We are mainly interested in the case with
$B=A$. In this case
we write simply $\Y_m$, $R_{n,m,k}$
and $R_{n,m}$ for $\Y_{m,A}$, $R_{n,m,k}(A)$ and $R_{n,m}(A)$ respectively.

We are interested in the asymptotic behaviour of $R_{n,m}(B)$ 
 for large $n,m$; in fact we take $m$ to be asymptotically proportional
 to $n$.  More generally, we consider $R_{n,m,k}(B)$ for
 fixed $k \in \N$.

 We also consider analogous 
quantities denoted
 $R'_{t,u}(B)$ and $R'_{t,u,k}(B)$ respectively,
defined similarly using  Poisson samples of points. To define these formally,
let  $(Z_t,t\geq 0)$ be a unit rate Poisson counting
process, independent of $(X_1,Y_1,X_2,Y_2,\ldots)$
and on the same probability space $(\bbS,\cF,\Pr)$ 
(so $Z_t$ is Poisson distributed with mean $t$ for each $t >0$).
Let  $(Z'_t,t\geq 0)$ be a second unit rate Poisson counting
process, independent of $(X_1,Y_1,X_2,Y_2,\ldots)$ and of $(Z_t,t \geq 0)$.
The point process $\Po_t:= \{X_1,\ldots,X_{Z_t}\}$  is 
a Poisson point process in $\R^d $ with intensity measure $t \mu$,
where we set $\mu$ to be the uniform  distribution over  $A$ 
 (see e.g. \cite[Proposition 3.5]{LP}).
The point process $\cQ_{t,B} := \{Y_1,\ldots,Y_{Z'_t}\}$  is 
a Poisson point process in $\R^d $ with intensity measure $t \nu$,
where we set $\nu$ to be the uniform  distribution over  $B$. 
Then 
for $t,u \in (0,\infty), k \in \N$ we define 
\bea
R'_{t,u,k}(B) : =
 \inf \left\{ r >0: \Po_t
(B(y,r)) \geq k 
~ \forall y \in \cQ_{u,B} \right\},
\label{Rdashdef}
\eea
with $R'_{t,u}:= R'_{t,u,1}$.
When $B=A$ we write simply
$\cQ_{t}$, $R'_{t,u,k}$, $R'_{t,u}$, 
for
$\cQ_{t,A}$, $R'_{t,u,k}(A)$, $R'_{t,u,k}(A)$, 
respectively.

We mention some notation used throughout. For
$D \subset \R^d$, let $\overline{D}$ 
denote the closure of $D$.
Let $|D|$ denote the Lebesgue  measure (volume) of $D$, and
$|\partial D|$ the perimeter of $D$, i.e. the
$(d-1)$-dimensional Hausdorff measure of
$\partial D$, when these are defined.
Given $t >1$, we write $\log \log t$ for $\log (\log t)$.
Let $o$ denote
the origin in $\R^d$.
 


Let $\theta_d$ denote the volume of a unit radius ball in $\R^d$.
Set $f_0 := 1/|A|$.

If $t_0 \in (0,\infty)$ and
$f(t),g(t)$ are two functions, defined for all $t \geq t_0$
with $g(t) >0$ for all $t \geq t_0$,
the notation $f(t)= O(g(t))$ as $t \to \infty$
means that $\limsup_{t \to \infty} (|f(t)|/g(t)) < \infty$,
and the notation $f(t)= o(g(t))$ as $t \to \infty$
means that $\limsup_{t \to \infty} (|f(t)|/g(t)) =0.$
If also $f(t) >0$ for all $n \geq n_0$, we use notation
$f(t)= \Theta(g(t))$ to mean that both $f(t)=O(g(t)$
and $g(t)= O(f(t))$.

\section{Statement of results}
\label{secweak}
\allco

Our results are concerned with 
weak convergence  for $R_{n,m,k}(B)$ (defined at (\ref{Rnkdef}))
as $n \to \infty$ with $k$ fixed and $m$ asymptotically proportional
to $n$.
We also give similar results for  $R'_{t,\tau t,k}$,
defined at (\ref{Rdashdef}),
as $t \to \infty$ with $\tau >0$ also fixed.
In all of these limiting results we are taking the variable $n$
to be integer-valued and $t$ to be real-valued.

Recall that our $\X$-sample is of points uniformly distributed
over a compact region $A \subset \R^d$, and the $\Y$-sample
is of points in $B$, where
 $B \subset A$ has a `nice' boundary. We now
make this assumption more precise. We always assume one of the
following:  \\

A1: $d \geq 2$ and $B=A$ and $A$ has a $C^{1,1}$ boundary 
and $\overline{A^o}=A$, or

A2: $d = 2$ and $B=A$ and $A$ is polygonal, or 

A3: $d \geq 2$ and $\overline{B} \subset A^o$,
and $B$ is Riemann measurable with $|B| >0$.
 (Recall that a compact set $B$ is said to be Riemann measurable  if
 $\partial B $ is Lebesgue-null.) \\

We say that $A$ {\em has a $C^{1,1}$ boundary} if 
for each $x \in \partial A$
there exists a neighbourhood $U$ of $x$ and a real-valued function $f$ that
is  defined on an open set in $\R^{d-1}$
and Lipschitz-continuously differentiable, such
that  $\partial A \cap U$, after a rotation, is the graph of the
function $f$. 
The $C^{1,1}$ boundary condition is milder than the
$C^2$ boundary condition that was imposed for analogous
results on the complete coverage threshold in \cite{Pen22}.
The extra condition $\overline{A^o} =A$ should also have
been included in  \cite{Pen22} to rule out examples
such as the union of a disk and a circle in $\R^2$.

For compact $A \subset \R^d$ satisfying A1 or A2,
let $|A|$ denote the volume (Lebesgue measure) of $A$ and $|\partial A|$
the perimeter of $A$, i.e. the $(d-1)$-dimensional Hausdorff measure
of $\partial A$, the topological boundary of $A$. Also define
\begin{align}
\sigma_A := \frac{|\partial A|}{|A|^{1-1/d}}.
	\label{e:defsigA} 
\end{align}
Note that $\sigma_A$ is invariant under scaling of $A$,
and is at least $d \theta_d^{1/d} $ by the isoperimetric inequality.
Sometimes
$\sigma_A^d$ is called the {\em isoperimetric ratio}
of $A$.

Our first result concerns
the case with $\overline{B} \subset A^o$. Recall that $f_0:=1/|A|$.
\begin{theo}[Fluctuations of $R_{n,m,k}(B)$ when $\overline{B} \subset
	A^o$]
\label{Hallthm}
Suppose A3 applies.
	Let $k \in \N$ 
	and 
	$\tau >0, \beta \in \R$. Let $m: \N \to \N$, and assume
	$\tau_n: = m(n) /n \to \tau$
	as $n \to \infty$. Then 
	as $n \to \infty$
	we have
	\begin{align}
		& \Pr[ n \theta_d
	f_0  R_{n,m(n),k}(B)^d   - \log n   -
	(k-1) \log \log n \leq \beta] 
		\nonumber \\
		& ~~~~ = 
	\exp	\left(
	 - \frac{\tau_n e^{-\beta}(k-1)^2 \log \log n}{(k-1)!\log n}
		\right)
		e^{-(\tau_ne^{-\beta})/(k-1)!} 
	+ O((\log n)^{-1}). 
		\label{0829a}
		\end{align}
	Also as $t \to \infty$ we have
	\begin{align}
		& \Pr[ t \theta_d f_0 (R'_{t,\tau t,k}(B))^d   
	- \log t   -
	(k-1) \log \log t \leq \beta]
\nonumber \\
		& ~~~~ =
	\exp	\left(
	 - \frac{\tau e^{-\beta}(k-1)^2 \log \log t}{(k-1)!\log t}
		\right)
		e^{-(\tau e^{-\beta})/(k-1)!}
	+ O((\log t)^{-1}). 
\lbl{0114a}
	\end{align}
\end{theo}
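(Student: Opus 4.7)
The plan is to reduce both statements to a Poisson approximation for the count of ``bad'' $Y$-points (those not $k$-covered by the $X$-sample). I would prove the Poissonised version~\eqref{0114a} first by the Chen--Stein method, and then obtain~\eqref{0829a} either by de-Poissonisation or by re-running the same argument directly on the binomial process.

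For~\eqref{0114a}, fix $k, \tau, \beta$ and let $r_t$ satisfy $t \theta_d f_0 r_t^d = \lambda_t$ with $\lambda_t := \log t + (k-1)\log\log t + \beta$. For $y \in B$ define $I_y := \mathbf{1}\{\Po_t(B(y,r_t)) < k\}$ and $\xi_t := \sum_{y \in \cQ_{\tau t, B}} I_y$, so that $\{R'_{t,\tau t,k}(B) \leq r_t\} = \{\xi_t = 0\}$. Since $\overline{B} \subset A^o$ and $B$ is compact, $\dist(B,\partial A) > 0$, so for all sufficiently large $t$ every $y \in B$ satisfies $B(y,r_t) \subset A$; hence $\Po_t(B(y,r_t))$ is $\mathrm{Pois}(\lambda_t)$ independently of the location of $y$. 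By Campbell's formula and the expansion $\Pr[\mathrm{Pois}(\lambda_t) < k] = \frac{e^{-\lambda_t}\lambda_t^{k-1}}{(k-1)!}(1 + (k-1)/\lambda_t + O(\lambda_t^{-2}))$, one obtains
\[
\E[\xi_t] = \tau t \,\Pr[\mathrm{Pois}(\lambda_t) < k] = \frac{\tau e^{-\beta}}{(k-1)!}\left(1 + \frac{(k-1)^2 \log\log t}{\log t} + O\left(\frac{1}{\log t}\right)\right).
\]

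The crux is the Poisson approximation $\Pr[\xi_t = 0] = \exp(-\E[\xi_t]) + O(1/\log t)$, which I would establish via the Chen--Stein method with dependency neighbourhood $\cN_y := \{y' : |y-y'| \leq 2r_t\}$ (making $I_y$ independent of $\{I_{y'} : y' \notin \cN_y\}$ by disjointness of the balls and independence of Poisson counts on disjoint sets). The Chen--Stein bound gives $d_{\mathrm{TV}}(\xi_t, \mathrm{Pois}(\E[\xi_t])) \leq b_1 + b_2$. For $b_1$: $\E[\#\cN_y] = O(tr_t^d) = O(\log t)$ and $\E[I_y] = O(1/t)$, so summing over $y$ gives $b_1 = O(\log t / t)$. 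For $b_2 = (\tau t / |B|)^2 \int\!\!\int_B \mathbf{1}\{|y_1-y_2| \leq 2r_t\}\,\Pr[I_{y_1} I_{y_2} = 1]\,dy_1\,dy_2$, decompose $\Po_t$ into its independent restrictions to $U := B(y_1,r_t) \cap B(y_2,r_t)$ and to $B(y_j, r_t) \setminus U$ ($j = 1, 2$), obtaining
\[
\Pr[I_{y_1} I_{y_2} = 1] = e^{-(\lambda_t + a)} Q_k(a, b),
\]
where $b := t f_0 |U|$, $a := \lambda_t - b$, and $Q_k$ is an explicit polynomial in $a, b$ of total degree $2(k-1)$. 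Passing to radial coordinates $\rho := |y_1 - y_2|$ and using that the relative overlap $|U|/(\theta_d r_t^d) = 1 - c_d \rho/r_t + O((\rho/r_t)^2)$ near $\rho = 0$, the factor $e^{-a}$ concentrates the integrand at scale $\rho \asymp r_t/\lambda_t$; a short calculation yields $b_2 = O\bigl(t^2 r_t^d \lambda_t^{-d} \cdot e^{-\lambda_t} \lambda_t^{k-1}\bigr) = O((\log t)^{1-d})$, which is $O(1/\log t)$ when $d=2$ and strictly better when $d \geq 3$. Combining $b_1$ and $b_2$ and exponentiating the expansion of $\E[\xi_t]$ yields~\eqref{0114a}.

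For~\eqref{0829a} the cleanest option is to rerun the Chen--Stein analysis on the binomial process directly. For $y \in B$ with $B(y, r_n) \subset A$, $\X_n(B(y, r_n))$ is $\Bin(n, p_n)$ with $p_n := \theta_d f_0 r_n^d$; a direct computation shows $\Pr[\Bin(n, p_n) < k] = \Pr[\mathrm{Pois}(n p_n) < k]\,(1 + O((\log n)^2/n))$, so the marginal and pair probabilities agree with their Poissonised counterparts up to negligible relative error. Alternatively one may de-Poissonise via $\Pr[R'_{t,\tau t,k}(B) \leq r] = \sum_{N, M} \Pr[Z_t = N]\Pr[Z'_{\tau t} = M]\, \Pr[R_{N,M,k}(B) \leq r]$ combined with Gaussian concentration of $Z_t, Z'_{\tau t}$ and the slow variation of $\Pr[R_{n, m, k}(B) \leq r_n]$ in $n, m$ at the scale $\sqrt n$. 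The hardest step is the sharp analysis of $b_2$: the naive bound $\Pr[I_{y_1} I_{y_2}] \leq \Pr[I_{y_1}]$ used over the full neighbourhood $|y_1 - y_2| \leq 2r_t$ yields $b_2 = O(\log t)$ and is useless; the required $O(1/\log t)$ comes only by exploiting that the pair probability has an exponentially sharp peak of width $O(r_t/\lambda_t)$ near the diagonal, so that the \emph{effective} volume of the close-pair region is $(r_t/\lambda_t)^d$, much smaller than $r_t^d$.
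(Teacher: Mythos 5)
Your proposal is correct and follows essentially the same route as the paper: define the count of $Y$-points that are not $k$-covered, compute its mean via the Poisson tail expansion (getting the same $(k-1)^2\log\log t/\log t$ correction), apply Chen--Stein Poisson approximation with the $2r_t$-dependency neighbourhood, and observe that the key pair-correlation term $b_2$ is $O((\log t)^{1-d})$ because the probability $\Pr[I_{y_1}I_{y_2}=1]$ decays exponentially at the scale $\|y_1-y_2\|\asymp r_t/\lambda_t$ due to independence of the Poisson process on $B(y_2,r_t)\setminus B(y_1,r_t)$. The paper packages the Chen--Stein step as a standalone lemma (Lemma~\ref{l:Poapprox}) applicable to all three boundary regimes A1--A3 and de-Poissonizes by a coupling argument (Lemma~\ref{l:Rmeta}), whereas you specialise to A3 from the outset and leave a choice between re-running Chen--Stein on the binomial process or a conditioning argument, but these are presentational rather than substantive differences.
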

	{\bf Remarks.}
	1. Given $\xi \in  \R$, $\theta \in (0,\infty)$, let $
	\Gum_{\xi,\theta}$ denote a Gumbel random variable
	with location parameter $\xi$ and scale parameter $\theta$,
	i.e. with cumulative distribution function (cdf)
	$F(x) = \exp(-e^{-(x-\xi)/\theta})$.
	Since the right hand side
	of \eqref{0829a} converges to $\exp(-(\tau e^{-\beta})/(k-1)!)$
	as $n \to \infty$,
it follows from \eqref{0829a} that as $n \to \infty$
	we have the convergence in distribution:
$$
		 n \theta_d
	f_0  R_{n,m(n),k}(B)^d   - \log n   -
	(k-1) \log \log n  \tod \Gum_{\log (\tau/(k-1)!),1}.
$$
Similarly, as $t \to \infty$,
	by \eqref{0114a} we have
$$
		 n \theta_d
	f_0  R'_{t,\tau t,k}(B)^d   - \log t   -
	(k-1) \log \log t  \tod \Gum_{\log (\tau/(k-1)!),1}.
$$

2.	The $O((\log n)^{-1})$ term in \eqref{0829a}
	and the
	 $O((\log t)^{-1})$ term in \eqref{0114a}
	come partly from an error bound
of $O((\log t)^{1-d})$ in a Poisson approximation
	for the number of isolated points; see Lemma \ref{l:Poapprox}.
	 If $d \geq 3$
the error bound in the Poisson approximation is of higher order,
and hence we can give a more accurate approximation
with an explicit $(\log n)^{-1}$ term (respectively, $(\log t)^{-1}$ term)
included in the first exponential factor on the right,
and an error
of $O((\frac{\log \log n}{\log n})^2)$ in \eqref{0829a} (resp., of
 $O((\frac{\log \log t}{\log t})^2)$ in \eqref{0114a}).
	See \eqref{e:Bdetail} and \eqref{e:BBdet} 
in	the proof of Theorem \ref{Hallthm} for details. \\


All of our remaining results are for the case $B=A$.

First we briefly discuss the case where
$A$ is the $d$-dimensional unit {\em torus}.
(and $B=A$). In this case, taking $f_0=1$, 
we can obtain exactly the same result
as stated in Theorem \ref{Hallthm},  by the same proof.
We note that a result along these lines 
(for $k=1$ only) 
has been provided previously (with a different proof)
in \cite[Theorem 3.2]{IY},
for $\tau$ large. \cite{IY} is more concerned with the threshold
$r$ such that each vertex of $\Y_m$ has a path to at least one other
point of $\Y_m$ in the BRGG. In any event, the authors of \cite{IY} explicitly
restrict attention to the torus, in their words, 
to `nullify some of the technical
complications arising out of boundary effects'.
In our next results, we embrace these technical complications.

We next give our main result for $d=2$, $k=1$.

\begin{theo}[Fluctuations of $R_{n,m}$ in a planar region with boundary]
\label{thsmoothgen}
Suppose $d=2$ and A1 or A2 holds.
	Set $f_0 := |A|^{-1}$.
	Let $\beta,\tau \in \R$ with $\tau >0$. Suppose $m:\N \to \N$
	with $\tau_n := m(n)/n \to \tau$ as $n \to \infty$. Then
	as $n \to \infty$
	\begin{align}
		\Pr \left[ n \pi f_0 R_{n,m(n)}^2
	- \log n \leq \beta \right] 
		=
		\exp \Big(- \frac{\tau_n \pi^{1/2} \sigma_A e^{-\beta /2}  }{
			2 (\log n)^{1/2}} \Big) 
		e^{ -  \tau_n e^{- \beta} }
		+ O((\log n)^{-1}) . 
		\label{0829b}
	\end{align}
	Also, as $t \to \infty$,
\bea
	\Pr \left[ t \pi 
	f_0 (R'_{t,\tau t})^2 - \log t \leq \beta \right]
	= 
		\exp \Big(- \frac{\tau \pi^{1/2} \sigma_A e^{-\beta /2} }{
			2 (\log t)^{1/2}} \Big) 
	\exp \left( -  \tau e^{- \beta} \right) 
		+ O((\log t)^{-1}) . 
\label{eqmain3}
\eea
	\end{theo}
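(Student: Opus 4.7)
The approach is to reduce the event $\{R_{n,m(n)} \leq r_n\}$ to the vanishing of a certain count of uncovered $Y$-points, and then to compute the expected count to sufficient precision to resolve the $1/\sqrt{\log n}$ correction term. Define $r_n$ and $r_t$ by $n\pi f_0 r_n^2 = \log n + \beta$ and $t\pi f_0 r_t^2 = \log t + \beta$, and set
\begin{equation*}
W_n := \#\{j \leq m(n) : \X_n \cap B(Y_j, r_n) = \varnothing\}, \qquad W'_t := \#\{y \in \cQ_{\tau t} : \Po_t \cap B(y, r_t) = \varnothing\}.
\end{equation*}
Then $\{R_{n,m(n)} \leq r_n\} = \{W_n = 0\}$ and $\{R'_{t,\tau t} \leq r_t\} = \{W'_t = 0\}$, so it suffices to compare $\Pr[W_n = 0]$ and $\Pr[W'_t = 0]$ with the right-hand sides of \eqref{0829b} and \eqref{eqmain3} up to the stated errors.

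For the Poisson version, independence of $\Po_t$ and $\cQ_{\tau t}$ together with the Poisson void-probability formula gives
\begin{equation*}
\mathbb{E}[W'_t] = \frac{\tau t}{|A|}\int_A \exp\bigl(-t |A \cap B(y,r_t)|/|A|\bigr)\,dy,
\end{equation*}
and similarly $\mathbb{E}[W_n] = (m(n)/|A|)\int_A (1-|A\cap B(y,r_n)|/|A|)^n\, dy$, which agrees with the Poisson formula at $t=n$ up to error $O((\log n)^2/n)$, using $np_y \leq \log n + \beta$. I would split $A$ into the deep interior $\{d(y,\partial A)>r_t\}$, where $|A\cap B(y,r_t)|=\pi r_t^2$ contributes $\tau e^{-\beta}+O(r_t)$, and a boundary strip of width $r_t$ parametrised by the nearest-point projection $(x,s)\in\partial A\times[0,r_t]$. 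When $\partial A$ is locally a straight line segment, $|A\cap B(y,r_t)| = \pi r_t^2 \phi(s/r_t)$ with $\phi(u):=1-\pi^{-1}\arccos u + \pi^{-1}u\sqrt{1-u^2}$, so $\phi(0)=1/2$ and $\phi'(0)=2/\pi$. The Laplace-type change of variable $w=(\log t+\beta)(\phi(u)-1/2)$ then gives
\begin{equation*}
\int_0^{r_t}\exp(-(\log t+\beta)\phi(s/r_t))\,ds = r_t\,t^{-1/2}e^{-\beta/2}\cdot\frac{\pi}{2\log t}\bigl(1+O(1/\log t)\bigr),
\end{equation*}
and integrating this over $\partial A$ (using $r_t\sim\sqrt{|A|\log t/(\pi t)}$) yields the boundary contribution $\tau\sigma_A\sqrt{\pi}\,e^{-\beta/2}/(2\sqrt{\log t})+O((\log t)^{-3/2})$. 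Under A1 the curvature of $\partial A$ introduces $O(r_t)$ corrections to the area formula at the same negligible order; under A2, each vertex of interior angle $\alpha$ yields a neighbourhood of area $O(r_t^2)$ on which the void probability is $O(t^{-\alpha/(2\pi)})$, contributing $O((\log t)\,t^{-\alpha/(2\pi)})=o(1/\log t)$.

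Next I would invoke a Chen--Stein Poisson approximation (via Lemma \ref{l:Poapprox} or a parallel direct argument) to show that $|\Pr[W'_t=0]-\exp(-\mathbb{E}[W'_t])|=O(1/\log t)$; the main input is an $O(1/\log t)$ bound on the sum of $\mathbb{E}[\1\{y_1,y_2\text{ both uncovered}\}]$ over pairs $y_1,y_2\in\cQ_{\tau t}$ within distance $2r_t$, obtained from analogous Laplace-type estimates. Combining this with the expansion of $\mathbb{E}[W'_t]$ produces \eqref{eqmain3}; the binomial statement \eqref{0829b} then follows either by de-Poissonization (using that $|Z_n-n|=O(\sqrt{n})$ in probability) or by running the same argument directly with $(1-p_y)^n = e^{-np_y}(1+O(np_y^2))$.

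The principal technical obstacle is the precision of the boundary Laplace-type analysis: the $\sigma_A$-correction to $\mathbb{E}[W'_t]$ sits at order $1/\sqrt{\log t}$, strictly larger than the claimed residual error $O(1/\log t)$ and strictly smaller than the leading $\tau e^{-\beta}$ term, so it must be extracted exactly rather than absorbed into the error. Carrying out this expansion uniformly across A1 (where curvature alters the area function near $\partial A$) and A2 (where polygon vertices require separate neighbourhood estimates) is the most delicate ingredient of the proof.
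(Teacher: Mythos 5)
Your proposal is correct and follows the same overall strategy as the paper: fix $r_t$ with $t\pi f_0 r_t^2 - \log t=\beta$, compute the expected number of uncovered $Y$-points to $1/\sqrt{\log t}$ precision by splitting $A$ into an interior piece (contribution $e^{-\beta}$) and a boundary strip (contribution $\sigma_A e^{-\beta/2}\sqrt{\pi}/(2\sqrt{\log t})$), extract the boundary term by a Laplace-type asymptotic, apply a Chen--Stein Poisson approximation with error $O(1/\log t)$, and de-Poissonize. Your $\phi(u)$ is exactly $1/2+h(u)/\pi$ in the paper's notation (Lemma~\ref{lemexp}), and your boundary calculation reproduces Proposition~\ref{p:average2d} for $k=1$.

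The one methodological difference worth noting is how the boundary strip is parametrised. You use the nearest-point projection onto $\partial A$ directly and argue that the curvature of $\partial A$ only perturbs the flat-boundary area formula $|A\cap B(y,r_t)|=\pi r_t^2\phi(s/r_t)$ by a relative $O(r_t)$, while the paper (Proposition~\ref{p:average2d}, Step~1) instead approximates $\partial A$ by a polygon $A_t$ with mesh $t^{-\alpha}$, $\alpha\in(1/4,1/2)$, and reduces to exactly flat boundary segments, with the approximation error localised in a thin ``railing'' region that contributes negligibly. Your direct approach is more elementary for $d=2$ and the curvature bookkeeping you sketch ($t\cdot O(r_t^3)=O(r_t\log t)\to 0$ in the exponent, and the $1-s\kappa(x)$ Jacobian factor contributing a further relative $O(r_t)$) does close without issue, but you should be aware that the paper's polygonal machinery is chosen to unify the $d=2$ and $d\geq 3$ boundary analyses (and the general-$k$ case where $\Lambda_{t,a}^{k-1}$ appears in the Laplace integral), where tracking curvature corrections directly is considerably less pleasant. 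Your treatment of A2 corners (void probability $O(t^{-\alpha/(2\pi)})$ times area $O(r_t^2)$, total $o(1/\log t)$) matches the paper's bound~\eqref{e:Corest}.
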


		{\bf Remark.}
		It follows from \eqref{0829b} that $n \pi f_0 R_{n,m(n)}^2
		-\log n \tod \Gum_{\log \tau,1}$.
		Denoting the median of the distribution of
		any continuous random variable $Z$ by $\mu(Z)$,
		we have
		$\mu(n\pi f_0 R_{n,m(n)}^2) = \log n +\mu(\Gum_{\log \tau,1}) + o(1)$.
		We can subtract the medians from both sides, and then we have
		$n \pi f_0 R_{n,m(n)}^2 - \mu(n \pi f_0 R_{n,m(n)}^2)
		\tod \Gum_{\log \log 2,1}$,
		where $\Gum_{\log\log 2,1}$ is a Gumbel random variable with scale parameter 1 and median 0.
		The second row of Figure~\ref{simulations1} illustrates
		each of these two convergences in distribution.
		It is clearly visible that subtracting the median gives
		a much smaller discrepancy between the distribution of
		$n \pi f_0 R_{n,m(n)}^2 - \mu(n \pi f_0 R_{n,m(n)}^2)$
		and its limit, suggesting that
		$\mu(n\pi f_0 R_{n,m(n)}^2)-\log n \to \mu(\Gum_{\log \tau,1})$
		quite slowly.
		However, we estimated $\mu(n \pi f_0 R_{n,m(n)}^2)$
		using the sample median of a large number of independent
		copies of $n \pi f_0 R_{n,m(n)}^2$.
		When applying estimates such as \eqref{0829b} to real data,
		a large number of samples may not be available,
		and we do not currently have an expression for 
		$\mu(n\pi f_0 R_{n,m(n)}^2)-\log n - \mu(\Gum_{\log \tau,1})$.
%

		Simulations with $A$ taken to be a disk or square
		suggest that even for quite large
		values of $n$, with $m(n) = \lfloor \tau n \rfloor$ for
		some fixed $\tau$, the estimated cdf of 
		$n \pi f_0 R_{n,m(n)}^2 - \log n$ from simulations does
		not match the limiting Gumbel cdf particularly well.
		This can be seen in the bottom-left plot of Figure~\ref{simulations1},
		where the estimated cdf (the blue curve)
		is not well-approximated by the limit (the black dashed curve).
		This is because
		the multiplicative  correction factor of
		$\exp(-\tau_n (\pi^{1/2}/2) \sigma_A e^{-\beta/2}
		  (\log n)^{-1/2})$, which we see in
		\eqref{0829b},  tends to 1 very slowly. 
		(We have written it as a multiplicative correction
		to ensure that the right hand side is a genuine
		cdf in $\tau_n$
			plus an $O((\log n)^{-1})$ error term.)

		If instead we  compare the cdf of $
		n \pi f_0 R_{n,m(n)}^2$ estimated by simulations
		with  the corrected cdf $F(x) = 
		\exp(- \frac{\tau_n \sigma_Ae^{-x/2}}{(\log n)^{1/2}})
		\exp(-\tau_n e^{-x})$,
		illustrated as a red dotted line in the same part of Figure~\ref{simulations1},
		we get a much better match. \\

	Next we give results for $d=2, k \geq 2$ and for
	$d \geq 3$. Given $(d,k)$
	we define the constant
  \begin{align}
	  \label{e:defcdk}
  c_{d,k} := \frac{\theta_d^{1-1/d} (1- 1/d)^{k-2+1/d}}{(k-1)! 2^{1-1/d}
  \theta_{d-1}},
\end{align}
	\begin{theo}
		\label{t:dhi}
		Suppose  A1 or A2 holds.
	Let $\beta,\tau \in \R$ with $\tau >0$. Suppose $m:\N \to \N$
	with $\tau_n := m(n)/n \to \tau$ as $n \to \infty$, and for
		$n \in \N$, $t >0$ let
		\begin{align*}
			u_n := \Pr [n \theta_d f_0 R_{n,m(n),k}^d 
		- (2-2/d) \log n - (2k -4 +2/d) \log \log n) \leq \beta ];
			\\
			u'_t := \Pr [t \theta_d f_0 (R'_{t,\tau t,k})^d 
		- (2-2/d) \log t - (2k-4 + 2/d) \log \log t) \leq \beta ].
		\end{align*}
			If $(d,k) = (2,2)$ then
			as $n \to \infty$,
			\begin{align}
				u_n = \exp \Big( -
				\frac{\tau_n \pi^{1/2}\sigma_A e^{-\beta/2} \log \log n}{
					8 \log n}
					\Big)
					\exp \left( - \tau_n  \left(
					e^{-\beta} +
				\frac{\pi^{1/2} \sigma_A e^{-\beta/2}}{4}\right) \right) + 
				O \big( \frac{1}{ \log n}
			 \big),
				\label{e:lim22}
			\end{align}
			and as $t \to \infty$,
			\begin{align}
				u'_t = \exp \Big( - 
				\frac{\tau_n \pi^{1/2}\sigma_A e^{-\beta/2} \log 
				\log t}{
					8 \log t}
					\Big)
					\exp \left( - \tau  \left(
					e^{-\beta} +
				\frac{\pi^{1/2} \sigma_A e^{-\beta/2}}{4}\right) \right) + 
				O \big( \frac{1}{ \log t}
			 \big).
				\label{e:Polim22}
			\end{align}
		If $d=2, k \geq 3$ or if $d \geq 3$ then as $n \to \infty$,
	\begin{align}
		u_n =
		\exp \Big(  - \frac{ c_{d,k} \tau_n \sigma_A e^{-\beta/2}
			(k-2+1/d)^2   
		\log \log n}{
			(1-1/d) \log n}
			\Big) \exp \left(- c_{d,k} 
			\tau_n \sigma_A e^{-\beta/2} 
		\right) \nonumber \\  + O\Big(\frac{1}{\log n}\Big)
		\label{e:limgen}
	\end{align}
	and as $t \to \infty$,
	\begin{align}
		u'_t =
		\exp \Big(  - \frac{ c_{d,k} \tau \sigma_A e^{-\beta/2}
			(k-2+1/d)^2   
		\log \log t}{
			(1-1/d) \log t}
			\Big) \exp \left(- c_{d,k} 
			\tau \sigma_A e^{-\beta/2} 
		\right)  \nonumber \\
		+ O\Big(\frac{1}{\log t}\Big).
		\label{e:Polimgen}
	\end{align}
	\end{theo}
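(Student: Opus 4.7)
My plan is to follow the same broad strategy as in Theorems \ref{Hallthm} and \ref{thsmoothgen}: first prove the Poissonized statements \eqref{e:Polim22} and \eqref{e:Polimgen}, and then de-Poissonize to obtain \eqref{e:lim22} and \eqref{e:limgen}. Define $r_n$ by $n \theta_d f_0 r_n^d = (2-2/d)\log n + (2k-4+2/d)\log\log n + \beta$ and let $N_n := \sum_{y \in \Y_{m(n)}} \1\{\X_n(B(y,r_n)) < k\}$ be the number of $Y$-points that are $k$-uncovered at radius $r_n$, so that $\{R_{n,m(n),k} \leq r_n\} = \{N_n = 0\}$. A Chen--Stein / Poisson approximation of the type used in Lemma \ref{l:Poapprox} gives $|\Pr[N_n = 0] - e^{-\mathbb{E} N_n}| = O((\log n)^{-1})$, so the task reduces to computing $\mathbb{E} N_n$ to the required precision.

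The core computation is $\mathbb{E} N_n = m(n) f_0 \int_A \Pr[\Bin(n, f_0 |B(y,r_n) \cap A|) < k]\, dy$, which I would split into an interior integral over $\{y \in A : \dist(y,\partial A) \geq r_n\}$ and a boundary-tube integral over $T_n := \{y \in A: \dist(y,\partial A) < r_n\}$. On the interior, $|B(y,r_n)\cap A| = \theta_d r_n^d$ and the Binomial tail is $\sim \frac{(n f_0 \theta_d r_n^d)^{k-1}}{(k-1)!} e^{-n f_0 \theta_d r_n^d}$; under the chosen scaling the interior integral tends to $\tau e^{-\beta}$ exactly when $(d,k)=(2,2)$, and is of lower order otherwise (a routine power-of-$n$ check). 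For the boundary contribution, parametrize $y = y_0 + s\nu(y_0)$ with $y_0 \in \partial A$, $s \in (0, r_n)$, and substitute $u = s/r_n$. A Taylor expansion gives $|B(y,r_n)\cap A| = r_n^d \gamma(u) + O(r_n^{d+1})$ where $\gamma(u) := \theta_d^{-1} \int_{-u}^1 \theta_{d-1}(1-v^2)^{(d-1)/2}\, dv$ satisfies $\gamma(0) = 1/2$ and $\gamma'(0) = \theta_{d-1}/\theta_d$. The boundary contribution becomes approximately $m(n) f_0 |\partial A| r_n \int_0^1 \frac{(L_n\gamma(u))^{k-1}}{(k-1)!} e^{-L_n\gamma(u)}\, du$ with $L_n := n f_0 \theta_d r_n^d$, and Laplace's method at $u=0$ identifies the leading asymptotic as $c_{d,k} \tau \sigma_A e^{-\beta/2}$ with the constant $c_{d,k}$ from \eqref{e:defcdk}.

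The main obstacle is extracting the subleading $(\log \log n)/\log n$ correction displayed inside the first exponential factor of \eqref{e:lim22}--\eqref{e:Polimgen}. This demands carrying the Laplace expansion to one further order: the prefactor $(L_n \gamma(u))^{k-1}$ evaluated near $u=0$ contributes a factor $(L_n/2)^{k-2}$, and $r_n \sim ((2-2/d)\log n/(n f_0 \theta_d))^{1/d}$ contributes a factor with exponent $1/d$, and each of these must be expanded as $1 + C(k-2+1/d)(\log\log n)/\log n + \cdots$ using $L_n = (2-2/d)\log n + (2k-4+2/d)\log\log n + \beta + o(1)$; collecting the resulting linear-in-$\log\log n / \log n$ terms and exponentiating yields the correction factor $(k-2+1/d)^2 (\log \log n)/((1-1/d)\log n)$. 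Auxiliary estimates are needed to confirm that (i) the second-order geometric remainder $O(r_n^{d+1})$ in $|B(y,r_n)\cap A|$, involving the curvature of $\partial A$ under A1 and vanishing identically under A2, contributes only at the $O((\log n / n)^{1/d})$ relative level; and (ii) under A2 the $O(r_n^2)$-neighbourhoods of polygon vertices contribute $O(n r_n^2 L_n^{k-1} e^{-L_n})$, which is negligible compared with the main $O(n r_n L_n^{k-2}e^{-L_n/2})$ boundary term.

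Finally, de-Poissonization proceeds along the same lines as in the proofs of Theorems \ref{Hallthm} and \ref{thsmoothgen}: couple $\X_n$ with $\Po_n$ and $\Y_{m(n)}$ with $\cQ_{m(n),A}$ via the counting processes $Z_n$ and $Z'_{m(n)}$, which concentrate at $n$ and $m(n)$ with fluctuations of order $n^{1/2}$, and use monotonicity of $R_{n,m,k}$ in $n,m$ together with the observation that replacing $n$ by $n \pm O(n^{1/2}\log n)$ perturbs $\mathbb{E} N_n$ by a factor $1 + O(n^{-1/2}\log n)$, absorbable into the stated $O(1/\log n)$ error. This transfers the Poissonized estimates \eqref{e:Polim22} and \eqref{e:Polimgen} to their binomial analogues \eqref{e:lim22} and \eqref{e:limgen}.
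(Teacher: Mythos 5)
Your strategy tracks the paper's at the top level (Poissonize via a Chen--Stein lemma, split the integral into interior vs.\ boundary tube, identify the leading and $\log\log$-correction terms, de-Poissonize), and the formula you produce is correct, but your boundary computation uses a genuinely different technical route. The paper's Propositions \ref{p:average2d} and \ref{p:average3d+} avoid tubular coordinates and curvature Jacobians altogether: they approximate $\partial A$ by a polytopal surface $\partial A_t$ with facet spacing $t^{-\alpha}$ for $\alpha\in(1/(2d),1/d)$, decompose a neighbourhood of $\partial A$ into prisms over the flat facets, a ``railing'' between $\partial A$ and $\partial A_t$, and small corner pieces, and then reduce the prism integrals to the one-dimensional flat-boundary asymptotics of Lemma \ref{lemexp}. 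You instead parametrize the tube by $y=y_0+s\nu(y_0)$ directly and Taylor-expand $|B(y,r_n)\cap A|$. This is cleaner conceptually, but it shifts the burden onto (a) controlling the $O(r_n^{d+1})$ curvature correction to $|B(y,r_n)\cap A|$ and (b) the Jacobian $\prod_i(1-s\kappa_i(y_0))=1+O(r_n)$ of the normal-coordinate map, neither of which you state explicitly; both are in fact $o(1/\log n)$ in relative terms under the chosen scaling $L_n\asymp\log n$, so the approach does work. The paper's polytopal machinery is heavier but avoids quoting any tubular-neighbourhood/curvature facts; your route is more classical but needs those facts filled in rigorously.

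One concrete slip: your estimate under A2 for the corner regions, $O(n r_n^2 L_n^{k-1}e^{-L_n})$, uses the \emph{interior} rate $e^{-L_n}$. Near a convex vertex with interior angle $\alpha<\pi$, a point at distance $O(r_n)$ from the vertex can have $|B(y,r_n)\cap A|$ as small as $(\alpha/(2\pi))\theta_d r_n^d$, so the uncovered probability there is of order $e^{-\kappa L_n}$ with $\kappa$ as small as $\alpha/(2\pi)<1/2$---\emph{larger} than both the interior and flat-boundary probabilities, not smaller. This is exactly why the paper introduces a separate $\kappa$ depending on the sharpest angle of $A$ in its corner bound \eqref{e:Corest}. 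The correct corner contribution is $O(n r_n^2\,\mathrm{poly}(\log n)\, n^{-\kappa(2-2/d)})$, which is still negligible (because of the extra factor $r_n\asymp(\log n/n)^{1/d}$), so your conclusion survives, but the displayed bound and the reasoning behind it are wrong as stated and would not carry over to a setting where the corner area were larger.

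Two smaller points. First, your sentence ``the interior integral tends to $\tau e^{-\beta}$ exactly when $(d,k)=(2,2)$'' should be qualified: it tends to $\tau e^{-\beta}$ for $(d,k)=(2,2)$ and vanishes for $d\ge3$ or $d=2,k\ge3$; you appear to mean this, but it is worth saying, since the $e^{-\beta}$ term appears in \eqref{e:lim22} but not in \eqref{e:limgen}. Second, your de-Poissonization via monotonicity of $R_{n,m,k}$ in $(n,m)$, a concentration window of width $n^{1/2}\log n$, and the bound $\E N_{n'}/\E N_n=1+O(n^{-1/2}(\log n)^2)$ is sound and somewhat simpler than the paper's explicit coupling argument in Lemma \ref{l:Rmeta} (which uses a window $n^{3/4}$ and an event decomposition); either works, and yours exploits a monotonicity the paper does not invoke.
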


		{\bf Remarks.}
		1. It follows from 
		\eqref{e:limgen},
		\eqref{e:Polimgen}
		that
		when $d =2, k \geq 3$ or $d \geq 3$ we have
		as $n \to \infty$ that
		$$
		n \theta_d f_0 R_{n,m(n),k}^d - (2-2/d) \log n
		- (2k-4+2/d) \log \log n \tod \Gum_{\log(c_{d,k} \tau
		\sigma_A),2},
		$$
		along with a similar result for $R_{t,\tau t,k}$. 
		On the other hand, when $d=2,k=2$ we have
		from \eqref{e:lim22} 
		that
		$$
		n \pi f_0 R_{n,m(n),2}^2 - \log n  - \log \log n
		\tod \max(\Gum_{\log \tau,1},\Gum'_{\log(\tau \pi^{1/2}
		\sigma_A/4),2}),
		$$
		where $\Gum$ and $\Gum'$ denote two independent
		Gumbel variables with the parameters shown.
		The distribution of the maximum of two independent Gumbel
		variables with different scale parameters is known
		as a {\em two-component extreme value} (TCEV) distribution
		in the hydrology literature \cite{Rossi}.

		2. As in the case of Theorem \ref{Hallthm}, when $d \geq 3$
		in Theorem \ref{t:dhi}
		we could replace the $O((\log n)^{-1})$
		remainder in \eqref{e:limgen} 
		with an explicit $(\log n)^{-1}$ term
		and an $O((\frac{\log \log  n}{\log n})^2)$ remainder,
		and likewise for the $O((\log t)^{-1})$ remainder in
		\eqref{e:Polimgen};
		see \eqref{e:dkbigdet} and \eqref{e:dkPobigdet}
		in the proof of Theorem \ref{t:dhi} for details. \\

Comparing these results with the corresponding results
for the complete coverage threshold \cite{HallZW,Janson,Pen22}, we find that
the typical value of that threshold (raised to the power $d$
and then multiplied by $n$) is greater than the typical
value of our two-sample coverage  threshold (transformed the same way)
by a constant multiple of $\log \log n$.  For example, our
Theorem \ref{Hallthm}
has a coefficient of $k-1$ for $\log \log n$ while
 \cite[Proposition 2.4]{Pen22} has a coefficient of $k$.
 When $d=2, k=1$,
 our Theorem \ref{thsmoothgen} has a coefficient of zero for $\log \log n$
 whereas \cite[Theorem 2.2]{Pen22} has a coefficient of $1/2$.

	We shall prove our theorems using the following strategy.
	Fix $k \in \N$.
	Given $t,r >0$ define the random `vacant' set
	\begin{align}
		\label{e:Vdef}
		V_{t,r,k}:=  \{x \in A: \Po_t (B(x,r)) < k\}.
	\end{align}

	Given $\gamma  \in (0,\infty)$, suppose we can find $(r_t)_{t >0}$
	such that $t \E[|V_{t,r_t,k} \cap B|]/|B|  = \gamma$.
	If we know $t |V_{t,r_t,k} \cap B|/|B|
	\approx \gamma $, then the 
	distribution of
	$\cQ_{\tau t,B} (V_{t,r_t,k})$ 
	is approximately Poisson
	with mean $\tau \gamma$,
	and we use  the Chen-Stein method
	to make this Poisson approximation quantitative, and hence
	show  that $\Pr[R'_{t,\tau t,k}
	\leq r_t]$ approximates to $e^{- \tau \gamma}$ 
	for $t $ large
	(see Lemma \ref{l:Poapprox}). By coupling binomial and Poisson
	point processes, we obtain a similar result for
	$\Pr[R_{n,m(n),k} \leq r_n]$ (see Lemma \ref{l:Rmeta}).

	Finally, we need  to
	find nice limiting expression for $r_t$ as $t \to \infty$.
	By Fubini's theorem
	$\E[|V_{t,r_t,k} \cap B|]= \int_B p_t(x)dx$,
	where we set $p_t(x)= \Pr[x \in V_{t,r_t,k}]$.
	Hence we need to take $r_t \to 0$. 
	Under A3, for $t$ large $p_t(x) $  is constant over $x \in B$ so 
	finding a limiting expression for  $r_t$ in that case is
	fairly straightforward.

	Under Assumption A1 or A2, we need to deal with boundary effects since
	$p_t(x)$ is larger for $x$ near the boundary of $A$ than in
	the interior (or `bulk') of $A$. In
	Lemma \ref{lemexp} we determine the asymptotic behaviour of
	the integral near a flat boundary; since the contribution
	of corners turns out to be negligible this enables us
	to handle the boundary contribution under A2.

	Under Assumption A1,
	we need to deal with integrals of $p_t(x)$
	over $x$ near 
	the curved boundary
	of $A$.  We approximate $p_t(x)$ by a function
	depending only on $\dist(x,\partial A) := \inf_{y \in \partial A}
	\|x-y\|$, and parametrise
	$x$ by the nearest point in $\partial A$ and the distance
	from $\partial A$.
	In Proposition \ref{thm:cov} 
	we provide a useful estimate on the Jacobian arising from
	this parametrization.
	The upshot is that we can reduce the integral to a one-dimensional
	integral that can be dealt with using Lemma \ref{lemexp}.

	Alternatively it is possible to handle the curved boundary by 
	 adapting  methodology of  \cite{Pen22}, whereby one 
	 approximates $A$  by a polytope
	$A_t$ with
	spacing that tends to zero more slowly than $r_t$.
	In an earlier version of this paper (v1 on ArXiv) this alternative
	method is  carried out. However the method developed
	here,
	using Proposition \ref{thm:cov}, seems
	to provide a cleaner proof and is likely to be useful in
	other settings.

	It turns out that $d=2,k=1$ is a special case because 
	in this case only, the contribution of the bulk dominates
	the contribution of the boundary region to $\E[|V_{t,r_t,k}|]$.
	When $d=2,k=2$ both contributions are equally important,
	and in all other cases the boundary contribution dominates
	the contribution of the bulk. This is why the formula for
	the centring constant for $R'_{t,\tau t,k}$ or
	$R_{n,m,k}$ in terms of $d$ and $k$
	is different for Theorem \ref{thsmoothgen} than for Theorem
	\ref{t:dhi} (the coefficient of $\log \log n$ being 0 rather
	than 1 in Theorem \ref{thsmoothgen}),
	and why in Theorem \ref{t:dhi} 
	the limiting
	distribution is TCEV 
	for $d=k=2$ but is Gumbel
	for all other cases.

 \section{Preparatory results}
 \label{s:prep}
 \allco

 We use the following notation from time to time.
 Given $r >0$, and  $A \subset \R^d$, set $\partial A^{(r)} :=
 A \cap \cup_{x \in \partial A} B_r(x)^o$. Aso
 set $A^{(-r)}:= A \setminus \partial A^{(r)}$.
	

        Let $\pi: \R^d \to \R^{d-1} $ denote projection onto
        the first $d-1$ coordinates and let
	 $\ee_d:= (0,\ldots,0,1)$, the $d$th coordinate vector in $\R^d$. 
	 Let $x \cdot y$ denote the Euclidean inner product
	 of vectors $x,y \in \R^d$.
	For $a \in[0,1]$, let  
	\begin{align}
		h(a) := |B_1(o) \cap ([0,a] \times \R^{d-1})|.
		\label{e:defh}
	\end{align}
	We suppress the dependence of $h(a)$ on on the dimension $d$. 

	Throughout this section we assume that $A \subset \R^d$ is
	bounded with a $C^{1,1}$ boundary, and that $A = \overline{A^\circ}$.

\subsection{Geometrical lemmas}
\label{ss:geom}

\begin{definition}[Sphere condition]
	For $z \in \partial A$ let $\hat n_z$ be the unit normal to $\partial A$ at $z$ pointing inside $A$.

	Given $\tau \geq 0$,
	let us say $\tau $ satisfies the {\em sphere condition}
	 for $A$ if, for all $x \in \partial A$,
	we have $B(x+ \tau \hat n_x , \tau) \subset A$
	and  $B(x -\tau \hat n_x, \tau) \cap A = \{x\}$.

Let $\tau(A)$ denote the supremum of the set of all $\tau$ satisfying 
the sphere condition for $A$.
\end{definition}
\begin{lemm}[Sphere condition lemma]
	$\tau(A) >0$; that is, 
	there exists a constant $\tau >0$ such that
	$\tau$ satisfies the sphere condition
	 for $A$.
\end{lemm}
\begin{proof}
	See	\cite[Lemma 7]{sphere-condition-lp}.
\end{proof}
\begin{remark}
	{\rm 
	(i) If $0 < \tau < \tau'$ and $\tau'$ satisfies the sphere condition
 for $A$, then so does  $\tau$.

	(ii) If $x \in \R^d$ with
	$\dist(x ,\partial A) < \tau(A)$,
	then $x$ has a unique closest point in $\partial A$.
	}
\end{remark}

Given small
$r  >0  $, and
$x \in \partial A^{(r)}$, 
 we are interested in estimating the volume of $A \cap B(x,r)$.
Using the sphere condition we can approximate this volume with
that
of a certain `sliced ball'.


For $x\in A$ let $a(x): = \dist (x,\partial A)$,
the Euclidean distance from $x$ to
$\partial A$.
For $x \in \partial A^{(r)}$, we shall approximate
$| B_r(x) \cap A | $ by $ 
(\frac{1}{2}\theta_d + h(a(x)/r))r^d$,
the volume of the portion of $B_r(x)$
which lies on one side of the tangent plane to $\partial A$
at the closest point to $x$ on $\partial A$.

\begin{lemm}
	\label{l:bdyvol}
	Suppose $0 < r <  \tau(A)$, 
	and $x \in \partial A^{(r)}$. Then
	\begin{equation}
		\label{eq:ball-volume-estimate}
		\left|
		|B_r(x) \cap A| - ((\theta_d/2) + h(a(x)/r))r^d
		\right|
		\leq
		\frac{2 \theta_{d-1} r^{d+1}}{\tau(A)}.
	\end{equation}
\end{lemm}

\begin{proof}
Without loss of generality the closest point on the boundary to $x$
is the origin $o$ and $x = a e_d$
for $a = a(x) \in [0,r)$.
Let $\bH := \{ y \in B_r(x) : y \cdot e_d \geq 0 \}$
the upper half-space,
	and note that $| B_r(x) \cap \bH  | = ((\theta_d/2) + h(a/r))r^d$,
the volume we are using to approximate $|B_r(x) \cap A|$.

	Let $\tau \in (r, \tau(A))$.
	Let $S:= B_\tau(\tau e_d)^o$ and $S' := B_\tau(-\tau e_d)^o$.
	Then the set $(B_r(x) \cap A) \triangle (B_r(x) \cap \bH)$
	is contained in $\R^d \setminus (S \cup S')$. Therefore
	by some spherical geometry, it is contained in
	a cylinder $C$ centred on $o$ of radius $r$ and height
	$2 s$, as illustrated in Figure \ref{f:hourglass},
	with $s $ chosen so $s \leq r$ and
	$(\tau-s)^2 + r^2= \tau^2$, so $2 \tau s= r^2 + s^2 \leq 2 r^2$,
	and hence $s \leq r^2/\tau$. Thus $|C| \leq
	2 \theta_{d-1}r^{d+1}/\tau$, and \eqref{eq:ball-volume-estimate}
		 follows by letting $\tau \uparrow \tau(A)$.  
\end{proof}

	\begin{figure}[!h]

\center
\includegraphics[width=5cm]{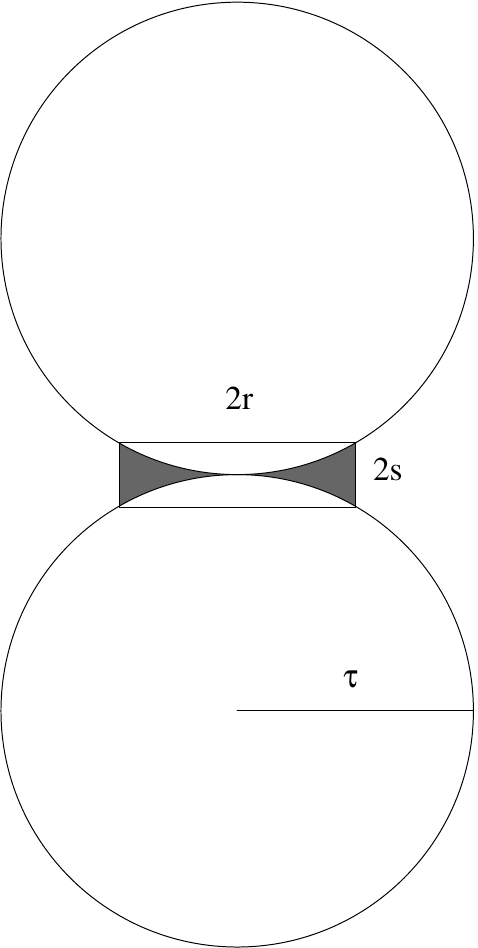}
		\caption{\label{f:hourglass}Illustration for proof of Lemma
		\ref{l:bdyvol}. The set $(B_r(x) \cap A)  \triangle
		(B_r(x) \cap \mathbb{H})$ is contained in the shaded region.
        }
\end{figure}

In Lemma \ref{lemgeom1a} below we give a lower bound on
the volume within $A$ of
the difference between
two balls, having their centres near the boundary of $A$.

\begin{lemm}
        \label{l:Fubex}
	For
        any compact convex $F \subset \R^d$ containing a Euclidean ball of
        radius $1/4$, any unit vector $\ee$ in $\R^d$, and
	any $a \in (0,2]$ we have
	$|(F + ae) \setminus F| \geq 8^{-d} \theta_{d-1} a$.
\end{lemm}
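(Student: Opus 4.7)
My plan is to reduce, by rotation and translation invariance of Lebesgue measure, to the case $e = e_d$ and $B(o, 1/4) \subseteq F$ (both the hypothesis and the quantity $|(F+ae)\setminus F|$ behave correctly under these reductions). Writing points of $\R^d$ as $(y, t)$ with $y \in \R^{d-1}$ and $t \in \R$, so that $\pi(y,t)=y$ and $h(y,t)=t$, the compactness and convexity of $F$ make each fiber $\{t : (y,t) \in F\}$ a closed interval $[s_1(y), s_2(y)]$ (empty if $y \notin \pi(F)$). Then $\{t : (y,t) \in F + ae_d\} = [s_1(y)+a, s_2(y)+a]$, and a short case split on whether $L(y) := s_2(y) - s_1(y)$ is $\geq a$ or $< a$ shows that the one-dimensional set difference has length exactly $\min(a, L(y))$. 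By Fubini,
\[
|(F + ae_d) \setminus F| = \int_{\R^{d-1}} \min(a, L(y)) \, dy.
\]

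I then restrict the integral to the $(d-1)$-dimensional disk $D := \{y \in \R^{d-1} : |y| \leq 1/8\}$. For $y \in D$, the fiber over $y$ meets $B(o, 1/4) \subseteq F$ in a chord of length $2\sqrt{1/16 - |y|^2} \geq 2\sqrt{1/16 - 1/64} = \sqrt{3}/4 > 1/4$, so $L(y) \geq 1/4$. Combined with the elementary inequality $\min(a, 1/4) \geq a/8$ valid for every $a \in (0, 2]$ (check: if $a \leq 1/4$ then $\min = a \geq a/8$; if $1/4 < a \leq 2$ then $\min = 1/4 \geq a/8$), this yields
\[
|(F + ae_d) \setminus F| \;\geq\; \int_D \frac{a}{8} \, dy \;=\; \frac{a}{8} \cdot \theta_{d-1} (1/8)^{d-1} \;=\; 8^{-d} \theta_{d-1} a,
\]
as required.

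I do not anticipate a real obstacle: the only care needed is to verify the identity $|\{t : (y,t) \in (F+ae_d)\setminus F\}| = \min(a, L(y))$ by the short case analysis, and then to make the right choice of the auxiliary radius, namely $1/8$. That radius is chosen so that (i) chords through $B(o, 1/4)$ over $D$ exceed $1/4$, ensuring $\min(a,1/4)\geq a/8$ in the full range $a\in(0,2]$, and (ii) the volume factor $(1/8)^{d-1}$ combines with the extra $1/8$ from the $\min$ bound to produce exactly $8^{-d}$. Note that the hypothesis $F\subseteq A$ is not used.
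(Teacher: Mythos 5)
Your proof is correct and follows essentially the same route as the paper's: reduce by rigid motion to $e=e_d$ and $B(o,1/4)\subset F$, apply Fubini over the disk of radius $1/8$ in $\R^{d-1}$, and bound each fiber contribution from below by $\min(a,1/4)$ via the chord of $B(o,1/4)$. Your write-up is a little more explicit (you spell out the exact fiber length $\min(a,L(y))$ and the chord computation, which the paper leaves implicit), but the argument is the same.
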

\begin{proof}
	Without loss of generality, $F \supset B(o,1/4)$ and 
	$e= e_d$.
	By Fubini's theorem,
\begin{align*}
        |(F + ae) \setminus F| \ge \int_{\pi(B(o,1/8))} \int \1\{
		(u,t) \in F, (u,t+a)\notin F\} dt du.
\end{align*}
For any fixed $u\in \pi(B(o,1/8))$, the set of $t$ such that the indicator is
	1 is an interval of length at least $\min(a,1/4)$.
	Hence, the double integral is bounded from below by $\min(a,1/4) \theta_{d-1}8^{1-d}$. The result follows.
\end{proof}

\begin{lemm}
	\label{lemgeom1a} 
%
	If $r \in (0,\tau(A)/192)$ and $x, y \in A$
	with  $\|y-x \| \leq 3r$ and
	 $\dist(x,\partial A) \leq \dist(y,\partial A)$, then
	\bea
	|   A \cap B_r(y) \setminus B_r(x)| \geq  8^{-d}
	\theta_{d-1} r^{d-1} \|y-x\|.
	\label{e:diffball}
	\eea
\end{lemm}
\begin{proof} 
	It suffices to consider the case with $x \in \partial A^{(r)}
	\cap A$.
%
%
%
	Let $x \in \partial A^{(r)} \cap A $. 
	%
%
	Without loss of
	generality (after a rotation and translation),
	we can assume that the closest point of $\partial A$ to $x$  
	lies at the origin, and $x = \|x\| e_{d}$.

	Fix $\tau \in (0,\tau(A))$. 
	Since $z=o$ is the closest point in $\partial A$ to $x$,
	$\hat n_o = e_d$, so
	by the sphere condition
	$B_\tau(\tau e_d) \subset A$ and
	$B_\tau(-\tau e_d)^o \subset A^c$. For $u \in \R^{d-1} $ with
	$\|u\| < \tau$, define
	$$
	\phi(u):= \sup \{a \in [-\tau,\tau]: (u,a) \notin A\}.
	$$
	Then $\phi(u) \leq s(\|u\|)$ where for $0 \leq v < \tau$
	we define  $s(v)$ so $0 \leq s(v) \leq v$ and $(\tau - s(v))^2 + v^2
	= \tau^2$, and hence $s(v) \leq v^2/\tau$ as in the proof
	of Lemma \ref{l:bdyvol}.

	Now suppose $0 < r < \tau/4$.
	Set $K= 16/\tau$. Then
	\bea
	|\phi(u)| \leq  
	\tau^{-1} \|u\|^2 \leq
	K r^2, ~~~ \forall u \in \R^{d-1} ~{\rm with}~ \|u\| \leq 4r.
	\label{0127b2}
	\eea

%
	Let $y \in B_{3r}(x) \cap A \setminus \{x\}$
	with $\dist(y,\partial A) \geq
	\dist(x,\partial A)$.
	We need to find a lower bound on $| A \cap B_r(y) \setminus B_r(x)|$.

	First suppose $y \cdot e_d \geq x \cdot e_d$.
	Let $H:= \{z \in B_r(x): (z-x)\cdot e_d \geq   r/4 \}$.
	We claim $H + (y-x) \subset A $. Indeed,
	for $z \in H + (y-x) $ we have $\| \pi(z)\| \leq 4 r$,
	and hence $\phi(\pi(z)) \leq Kr^2$ by (\ref{0127b2}). 
	Therefore, provided $r<1/(4K)$,
	we have
	$z \cdot e_d  \geq  r/4 \geq Kr^2 \geq \phi(\pi(z))$, so $z \in A$,
	justifying the claim. Using the claim, and Lemma \ref{l:Fubex},
	we obtain that
	\begin{align}
		| A \cap  B_r(y) \setminus B_r(x) | & \geq
	|(H +(y-x))  \setminus H| 
		\nonumber \\
		& \geq 8^{-d} \theta_{d-1}
	r^{d-1} \|y-x\|,
	~{\rm if}~ y \cdot e_d \geq x \cdot e_d.
	\label{0127c2}
	\end{align}

	Now suppose $y \cdot e_d < x \cdot e_d$.
	Note that
	$\pi(y) \neq \pi(x)$ since $y \neq x$ and
	$\dist(y,\partial A) \geq \dist(x,\partial A)$.
	Let $B'_r$ be the closed half-ball of radius
	$r$ centred on $x$, having the property
	that $y' := x+ (r /\|y-x\|) (y-x)$ has the
	lowest $d$-coordinate of all points in $B'_r$.
	Let $H'$ be the portion of $B_r(x)$ lying above
	the upward translate of the bounding hyperplane of $B'_r$
	by a perpendicular  distance of	$r/4$
	(see Figure \ref{fig2}).
	\begin{figure}[!h]

\center
\includegraphics[width=10cm]{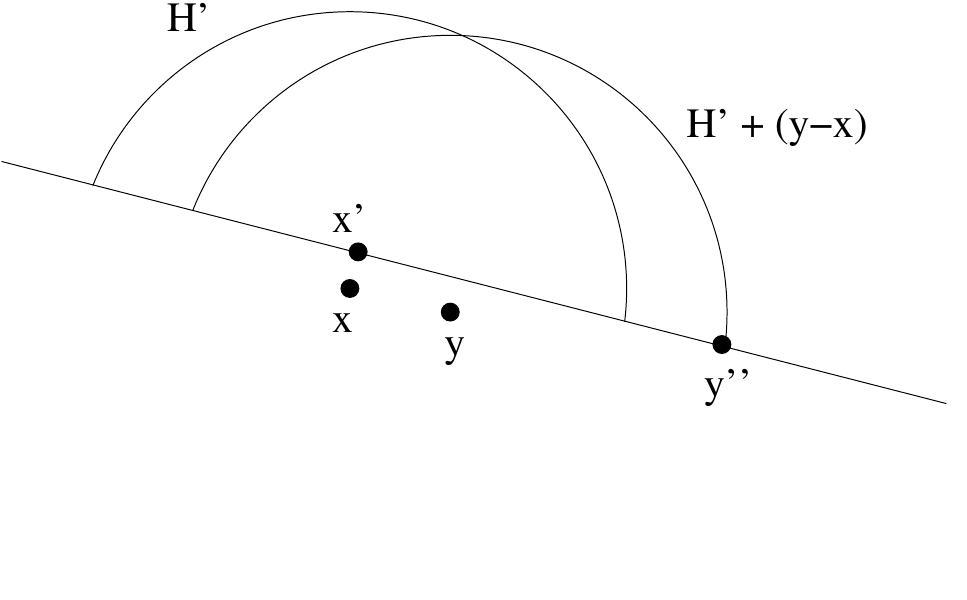}
		\vspace{-2cm}
		\caption{\label{fig2}Illustration for proof of Lemma
		\ref{lemgeom1a}. The segment $H'$ is centred on $x$.
        }
\end{figure}

	Since $\dist(y,\partial A) \geq \dist(x, \partial A) =
	x \cdot e_d$,
	using (\ref{0127b2})
	we have 
	\bea
	y \cdot e_d \geq \phi(\pi(y)) + x \cdot e_d  \geq x \cdot e_d 
	- (K/9)\|\pi(y)\|^2. 
	\label{0128a}
	\eea
	Let $x'$ be the point in the bounding hyperplane
	of $H'$ that lies closest to $x$. Then the line
	segment $[x,x']$ is almost vertical;
	the angle between this line segment and the
	vertical is the
	same as that between the line segment $[x,y]$ and the horizontal.
	Therefore
	\bea
	\frac{(x' - x) \cdot e_d}{r/4}  = \frac{\|\pi(y)\|}{\|y-x\|}.
	\label{0128b}
	\eea
	Using (\ref{0128a}), provided $r < 1/K$
	we have 
	$$
	\|y-x\| \leq \|\pi(y)\| (1 + (K/9) \|\pi(y)\|)   
	\leq (9/8) \|\pi(y)\|,
	$$
	so we obtain from (\ref{0128b}) that
	$$
	(x' - x) \cdot e_d \geq (2/9) r.
	$$
	Now letting $y''$ be the  point in $H' + (y-x)$
	with lowest $d$-coordinate, we have that 
	$y'' = x' + a (y-x)$
	with $a = 1 +  (15/16)^{1/2}r/\|y-x\|$ (note  $H'$
	is not quite a half-ball).
	Hence
	$$
	(x'- y'') \cdot e_d  \leq \left( \frac{3r}{\|y-x\|} \right)
	(x-y) \cdot e_d \leq K r^2,
	$$
	where the last inequality came from (\ref{0128a}).
	Hence, provided $r < 1 /(12K) = \tau/192$,
	\bean
	y''\cdot e_d = x'\cdot e_d -  (x' - y'') \cdot e_d
	\geq x \cdot e_d + (2/9)  r - K r^2  \geq
	 r/8.
	\eean
	On the other hand, for all $z \in H'+ (y-x)$ we
	have $\|\pi(z) \| \leq 4r$,  so that $\phi(\pi(z)) \leq
	Kr^2$ by (\ref{0127b2}). Provided $r < 1/(8K)$ 
	we therefore
	have $
	\phi(\pi(z))
	\leq r/8 \leq 
	z \cdot e_d $ and hence $z \in A$. Therefore $H' + (y-x) \subset A$.
	Also $H'$ contains a ball of radius $r/4$.
	Therefore using Lemma \ref{l:Fubex},
	we have
	$$
	|A \cap B_r(y) \setminus B_r(x) | \geq
	|(H' + (y-x) ) \setminus H'| \geq 8^{-d} \theta_{d-1}
	r^{d-1}\|y-x\|
	,~~{\rm if~} y\cdot e_d < x \cdot e_d.
	$$
	Combined
	with (\ref{0127c2}) this yields 
	(\ref{e:diffball}).
\end{proof}


%
%

	\subsection{Integral asymptotics}
	\label{ss:keylem}
	For $a \in[0,1]$, let  $h(a) := |B_1(o) \cap ([0,a] \times \R^{d-1})|$
	as at \eqref{e:defh}.
The following lemma is very useful for estimating the integral
of $\PP[x \in V_{t,r,k}] $
over a region near the boundary of $A$,
where $V_{t,r,k}$ was defined at \eqref{e:Vdef})
\begin{lemm}
	\label{lemexp}
Let $\ell, j \in \Z_+:= \N \cup \{0\}$ and let $\alpha_0 >0$, $\eps \in (0,1)$.
	Then as $s \to\infty$,
\begin{align}
\theta_{d-1}
\int_0^1 e^{-sh(a)} (\alpha_0 + h(a))^\ell da =
\alpha_0^\ell s^{-1}
+ \ell \alpha_0^{\ell-1}  s^{-2} + O(s^{\eps -3}).
	\label{e:intest1}
	\end{align}
	Also
	\begin{align}
		\theta_{d-1}
\int_0^1 e^{-s h(a)} \left( \alpha_0 + h(a) \right)^{j}
\left( 1 + \frac{j}{s\left(\alpha_0 + h(a) \right) }
\right) da 
		= \alpha_0^j s^{-1} +  2 j\alpha_0^{j-1} s^{-2} 
	+ O(s^{\eps -3}).
		\label{e:intest2}
	\end{align}
\end{lemm}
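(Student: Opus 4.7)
The plan is to reduce both integrals to standard Laplace-type integrals on a half-line via the change of variable $u = h(a)$, and then combine a short Taylor expansion near $u = 0$ with a crude exponential-decay estimate away from $u = 0$. Since $h$ is a smooth strictly increasing bijection $[0,1] \to [0,\theta_d/2]$ with $h'(a) = \theta_{d-1}(1-a^2)^{(d-1)/2}$, its inverse $a(u) := h^{-1}(u)$ is smooth on $[0,\theta_d/2)$. Setting $\phi(u) := (1-a(u)^2)^{-(d-1)/2}$, the substitution converts the left-hand side of \eqref{e:intest1} into $\int_0^{\theta_d/2} e^{-su}(\alpha_0+u)^\ell \phi(u)\, du$. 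From $h(a) = \theta_{d-1} a + O(a^3)$ I get $a(u) = u/\theta_{d-1} + O(u^3)$ near $u=0$, hence $\phi(0) = 1$ and $\phi(u) = 1 + O(u^2)$ on some fixed interval $[0,c]$ with $c = c(d) > 0$, while near the upper endpoint $\phi$ has an integrable power-type singularity.

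To prove \eqref{e:intest1}, I would fix $u_s := s^{\eps-1}$ (which is $<c$ for $s$ large) and split the integral at $u_s$. On $[u_s,\theta_d/2]$ the factor $e^{-su}$ is bounded by $e^{-s^\eps}$ and $\phi$ is integrable, so this tail is $O(e^{-s^\eps})$, negligible compared to $s^{\eps-3}$. On $[0,u_s]$ I would use the expansion
\begin{align*}
(\alpha_0+u)^\ell \phi(u) = \alpha_0^\ell + \ell\alpha_0^{\ell-1} u + R(u), \qquad |R(u)| \leq C u^2,
\end{align*}
with $C = C(d,\ell,\alpha_0)$; the remainder then contributes $\int_0^{u_s} e^{-su}|R(u)|\,du = O(s^{-3})$, while extending the polynomial part from $[0,u_s]$ back to $[0,\infty)$ costs only $O(e^{-s^\eps})$. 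Applying $\int_0^\infty e^{-su} u^k\,du = k!\,s^{-k-1}$ for $k \in \{0,1\}$ yields \eqref{e:intest1}.

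For the second identity I would use the algebraic factorisation
\begin{align*}
(\alpha_0+h(a))^j\Bigl(1 + \tfrac{j}{s(\alpha_0+h(a))}\Bigr) = (\alpha_0+h(a))^j + \tfrac{j}{s}(\alpha_0+h(a))^{j-1}
\end{align*}
(with the second summand absent when $j=0$) to split the left-hand side of \eqref{e:intest2} into two pieces and apply \eqref{e:intest1} to each. The first piece contributes $\alpha_0^j/s + j\alpha_0^{j-1}/s^2 + O(s^{\eps-3})$; for $j\geq 1$ the second contributes $(j/s)(\alpha_0^{j-1}/s + O(s^{-2})) = j\alpha_0^{j-1}/s^2 + O(s^{-3})$, and summing the two produces the claimed $s^{-2}$ coefficient $2j\alpha_0^{j-1}$.

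The only real subtlety is the behaviour of $\phi$ near $u = \theta_d/2$, where $\phi(u)$ blows up like $\mathrm{const}\cdot(\theta_d/2 - u)^{-(d-1)/(d+1)}$; however, this integrable singularity is comfortably killed by $e^{-su}$ for any $\eps < 1$, so the endpoint contributes negligibly. Everything else is a standard Laplace-type asymptotic expansion.
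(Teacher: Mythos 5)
Your proof is correct, and it takes a genuinely different route from the paper's. The paper substitutes $w = \theta_{d-1}sa$, keeping $a$ as the underlying variable, and must then Taylor-expand \emph{both} $e^{-sh(a)}$ (as $e^{-w}(1+O(w^3/s^2))$) and the power $h(a)^i$ (as $(w/s)^i(1+O(w^2/s^2))^i$), after first invoking the binomial theorem to reduce $(\alpha_0+h(a))^\ell$ to the monomials $h(a)^i$. You instead substitute $u=h(a)$ directly, which makes the exponential factor $e^{-su}$ \emph{exact} and concentrates all the nonlinearity into the single Jacobian factor $\phi(u)=(1-a(u)^2)^{-(d-1)/2}=1+O(u^2)$; you then Taylor-expand $(\alpha_0+u)^\ell\phi(u)$ in one step. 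This is cleaner, and as a bonus it yields an $O(s^{-3})$ remainder (stronger than the stated $O(s^{\eps-3})$): the quadratic remainder contributes $\int_0^{u_s}e^{-su}O(u^2)\,du = O(s^{-3})$, and both the tail $[u_s,\theta_d/2]$ and the cost of extending the linear part to $[0,\infty)$ are $O(e^{-s^\eps})$, super-polynomially small. Your one extra obligation, correctly discharged, is checking integrability of $\phi$ at the upper endpoint: $\phi(u)\sim c(\theta_d/2-u)^{-(d-1)/(d+1)}$ with exponent strictly less than $1$, so $\|\phi\|_{L^1}<\infty$ and the tail bound goes through. For \eqref{e:intest2}, your algebraic factorisation followed by two applications of \eqref{e:intest1} (with $\ell=j$ and $\ell=j-1$) matches the paper exactly. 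Both routes are standard Laplace asymptotics; yours trades the paper's linear change of variable plus two-sided Taylor expansion for a nonlinear change of variable plus a single Jacobian expansion, which most readers would find tidier.
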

\begin{proof}
	Note first, for $0 < x < 1$, that
	\begin{align*}
		h(x) & = \theta_{d-1} \int_0^x (1-y^2)^{(d-1)/2}dy
		= \theta_{d-1} \int_0^x
		\left( 1 + O (y^2) \right) dy
		\nonumber \\ &
		 = \theta_{d-1}x  + O(x^3)
		~~~{\rm as}~x \downarrow 0.
	\end{align*}
Thus,	setting $w=\theta_{d-1} sa$, we have
$h(a) = w/s + O((w/s)^3)$, and
	 $e^{-sh(a)} = e^{-w}(1+ O(w^3/s^2))$. 
	Given $i \in \Z_+ $,
	let $\delta = \eps/(4+i) $.
	Then
	\begin{align*}
		\theta_{d-1} \int_0^{s^{\delta-1}}  e^{-sh(a)} h(a)^i da & = 
		\int_0^{\theta_{d-1}s^\delta} e^{-w} \Big(1 + 
		O\big(\frac{w^3}{s^2}\big)\Big)
		\Big(\frac{w^i}{s^{i+1}}\Big) 
		\Big(1 + O\big(\frac{w^2}{s^2}\big)\Big)^i 
		dw
		\\
		& = s^{-i-1} \int_0^{\theta_{d-1}s^\delta} w^i
		e^{-w} dw + O\Big( s^{-3-i} \int_0^{\theta_{d-1}s^{\delta}}
		w^{3+i} dw \Big)
		\\
		& =  s^{-i-1} \Big(i! 
		- \int_{\theta_{d-1}s^\delta}^\infty w^i e^{-w} dw\Big)
		+ O(s^{-i-3} s^{\delta(4+i)}) 
		\\
		& = 
		i!s^{-i-1} + O(s^{\eps-i-3}).
		\end{align*}
	Also $ \int_{s^{\delta-1}}^1  e^{-sh(a)} h(a)^i da   $ 
 is $O(e^{-(\theta_{d-1}/2)s^\delta})$ since
$ (\theta_{d-1}/2)
s^{\delta -1} \leq h(a) \leq \theta_d/2$ for $a$ in this range.
Therefore by binomial expansion, for $\ell \in \Z_+$
we have \eqref{e:intest1}.
Applying \eqref{e:intest1}
with $\ell=j$ and (if $j >0$) also
with $\ell = j-1$ gives us \eqref{e:intest2}.
\end{proof}

For integrating functions near the boundary of a smoothly-bounded set $A$,
	we have a useful change of variables which allows us to turn an integral
	over a region near the boundary into a double integral
	with one variable a boundary point
	and the other variable the distance to the boundary.
	

\begin{prop}[Reparameterization]
	\label{thm:cov}
	There are positive finite  constants $c = c(A), r_0 = r_0(A)$, such that
for all $r \in (0,r_0)$, and all bounded measurable
	 $\psi: A \to [0,\infty)$,
%
%
\begin{equation}
	\label{eq:change-of-variables}
	\Big |\int_{\partial A^{(r)}} \psi(y) \, \d y
	- \int_{0}^{r} \int_{\partial A} \psi(z + s \hat n_z) \, \d z \, 
	\d s \Big|
	\leq c r
	 \int_{0}^{r} \int_{\partial A} \psi(z + s \hat n_z) \, \d z \, \d s,
\end{equation}
where the
	inner integral is a surface integral.
%
	If $\psi(y)$ depends only on $\dist(y, \partial A)$,
	i.e.\  there exists $\Psi :[0,r_0) \to \R$ such that
	$\psi(z + s \hat n_z) = \Psi (s)$ for all $(z,s) \in \partial A \times 
	(0,r_0]$,
then
\begin{equation}
	\Big| \int_{\partial A^{(r)}} \psi(y) \, \d y 
	- |\partial A| \int_0^{r} \Psi (s) \,\d s \Big|
	\leq c  r |\partial A| 
	\int_0^{r} \Psi (s) \,\d s.
	\label{e:CoVcoro}
\end{equation}
\end{prop}
\begin{proof}
	By the assumptions on $A$, for each $x \in \partial A$ 
	there is a constant $\delta(x) \in (0,\tau(A)/3)$, 
	such that 
	after a rotation $\cR$ about $x$,
	within the ball $B(x,3 \delta(x))^o$,
	the set $A$
         coincides with the closed epigraph of a
	$C^{1,1}$ function $\phi:U\to \R$
	with $U$ an open ball of radius $3 \delta(x)$
         in $\R^{d-1}$
	centred on $\pi(x)$ (recalling that
	$\pi: \R^d \to \R^{d-1} $ denotes projection onto
	the first $d-1$ coordinates); that
	is, 
	$$
	\cR(A) \cap B(x,3 \delta(x)) = \{(u,s): u \in U, s \in
	[\phi(u),\infty) \} \cap B(x,3 \delta(x)).
	$$

	 By a compactness argument we can cover $\partial A$ with a
	finite  collection of balls $B(x_i, \delta(x_i))$, $1 \leq i \leq I$
	 with $x_1, \ldots, x_I \in \partial A $.
	 For $r < \min_i \delta(x_i) $  we have $\partial A^{(r)} \subset
	 \cup_{i=1}^I B(x_i,2 \delta(x_i))$.
	 Since we can consider separately the integral of $\psi$ over 
	 $\partial A^{(r)} \cap B(x_1,2 \delta(x_1)$, over 
	 $\partial A^{(r)} \cap B(x_2,2 \delta(x_2)) \setminus
	 B(x_1,2 \delta(x_1))$,
	 over
	 $\partial A^{(r)} \cap B(x_3,2 \delta(x_3)) \setminus 
	 [B(x_1,2 \delta(x_1)) \cup  B(x_2,2 \delta(x_2))]$, and so on, 
	 it suffices to prove the result for the case where
	 $\psi$ is supported by a single ball $B(x, 2 \delta(x))$ for some
	 fixed
	 $x \in \partial A$.

	 Without loss of generality we assume the rotation $\cR$ is
	 the identity map, so 
	within the ball $B(x,3 \delta(x))^o$,
        $A$ coincides with the closed epigraph of a
	$C^{1,1}$ function $\phi:U\to \R$
	with $U$ a $(d-1)$-dimensional open ball of radius $3 \delta(x)$
	centred on $\pi(x)$.

	\begin{figure}[!h]
\center
\begin{tikzpicture}
	\node at (0,0) {\includegraphics[width=10cm]{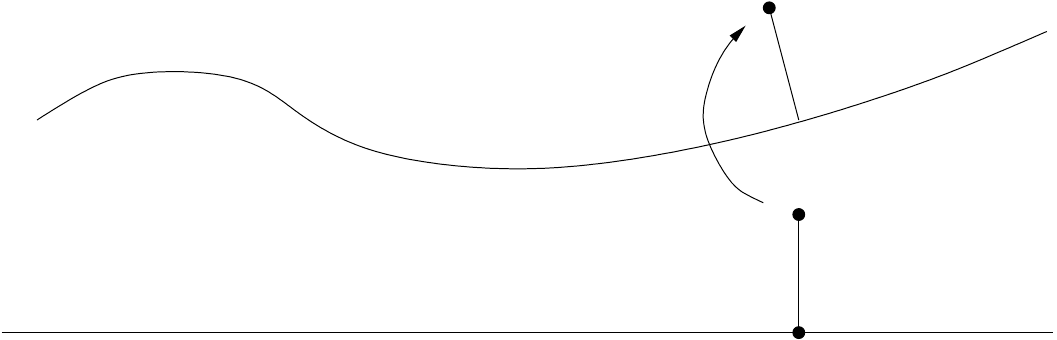}};
	\node at (-0.2,0.3) {\footnotesize $\partial A$};
	\node at (3,1.6) {\footnotesize $g(u,s)$};
	\node at (3.1,-0.4) {\footnotesize $(u,s)$};
	\node at (2.7,-1.8) {\footnotesize $(u,0)$};
\end{tikzpicture}
\caption{\label{fig3}Illustration of the mapping $g$ in the proof of Proposition
	\ref{thm:cov}.
}
\end{figure}

	For $(u,s) \in U \times (0,\delta(x))$ let $g(u,s) := (u, \phi(u) ) +
	s \hat n_{(u,\phi(u))}$, as shown in Figure \ref{fig3}, and observe that 
	$
	\hat n_{(u,\phi(u))} = (1+ | \nabla \phi|^2)^{-1/2} (-\nabla \phi, 1).$
	Since $\delta(x) < \tau(A)$, it follows
	from the sphere condition that $g: U \times (0,\delta(x)) \to A$ is
	injective. Since $\phi$ is $C^{1,1}$, $\nabla \phi(u)$ is Lipschitz
	continuous on $u \in U$, and therefore by Rademacher's theorem
	(see e.g. \cite{Fed}), there exists a set $U' \subset
	U$ of full $(d-1)$-dimensional
	Lebesgue
	measure such that $\hat n_{u,\phi(u)}$ is differentiable 
	for all $u \in U'$. Moreover by the Lipschitz continuity, 
	and the definition of partial derivatives,
	all partial derivatives of $\hat n_{u,\phi(u)}$ are uniformly bounded on
	$U'$.
	Then for $0< r < \delta(x) $,
	by \cite[Theorems 3.2.5 and 2.10.43]{Fed}
	(or if $\phi $ is $C^2$,  \cite[Theorem 17.2]{Billingsley})
	we have
	\begin{align}
		\int_{\partial A^{(r)}} \psi(y)dy = \int_{U \times (0,r)}
	\psi(g(u,s)) 
	\Big| \det \Big(\frac{\partial(g(u,s))}{\partial (u,s)}  \Big) \Big| 
	d(u,s),
		\label{e:forCoV}
	\end{align}
	where $J:= \frac{\partial g(u,s)}{\partial(u,s)}$ is the $d \times d$
	Jacobian matrix of the mapping $g$, which is defined
	for all $(u,s) \in U' \times (0,r)$.
	Given $i,j \in \{1,\ldots, d-1\}$
	the $(i,j)$th entry $J_{ij}$ of $J $ is given
	by $\frac{\partial g_i}{\partial u_j} = \delta_{ij} + O(r)$,
	where the constant in the $O$ term is independent of
	$u \in U'$ and $s \in (0,r)$. Also  $J_{dj} = \frac{
		\partial \phi}{\partial u_j} + O(r)$, while
		$J_{id} = \frac{ \partial g_i(u,s)}{\partial s}$
		so the last column of $J$ is given by the vector
		$\hat n_{(u,\phi(u))}$. Therefore
		\begin{align*}
	\Big| \det \Big(\frac{\partial(g(u,s))}{\partial (u,s)}  \Big) \Big| 
			& = (1+ O(r))(1+ | \nabla \phi|^2)^{-1/2} \Big( 1 + 
			\big(
	\frac{\partial \phi}{\partial u_1}\big)^2 +
			\cdots + \big(\frac{\partial \phi}{
		\partial u_{d-1}}\big)^2  \Big) \\
	&	= 
	 (1+ O(r) )(1+ | \nabla \phi|^2)^{1/2},
		\end{align*}
		where the $O$ term is independent of $(u,s) \in U' \times
		(0,r)$.
		Therefore by \eqref{e:forCoV},
		\begin{align*}
		\int_{A^{(r)}}
			\psi(y) dy 
			&	= (1+O(r)) \int_0^r \int_U 
		\psi(g(u,s))
			\sqrt{
				1  + |\nabla \phi(u)|^2} du ds
			\\
			&	= (1+O(r)) \int_0^r \int_U 
			\psi((u,\phi(u))+ s\hat n_{(u,\phi(u))})
			\sqrt{1  + |\nabla \phi(u)|^2} du ds
			\\
			&	= (1+O(r)) \int_0^r \int_{\partial A} 
			\psi (z + s \hat n_z) dz ds,
		\end{align*}
		which gives us 
	\eqref{eq:change-of-variables}. It is clear that \eqref{e:CoVcoro} follows from
	\eqref{eq:change-of-variables}. 
\end{proof}

\author{}

\section{Probability approximations}
\allco

In this section we 
assume $k \in \N$ is fixed and
$(r_t)_{t >0}$ is given and satisfies $tr_t^d = \Theta(\log t)$ as
$t \to \infty$. 
With $V_{t,r,k}$ defined at \eqref{e:Vdef},
		for $x, y \in A$ we define
		\begin{align}
		p_t(x):= \Pr[x \in V_{t,r_t,k}]; ~~~~~ \pi_t(x,y):= \Pr[\{x,y\} \subset V_{t,r_t,k}].
			\label{e:defptpit}
		\end{align}
	Since $k$ is fixed we are suppressing
	the dependence on $k$ in this notation.
For Borel $B \subset A$ with $|B|>0$, we define
 \begin{align}
	 \gamma_t(B) := (t/|B|) \E[|V_{t,r_t,k} \cap B|]
	 = (t/|B|) \int_B p_t(x) dx, 
	 \label{e:defgamma}
 \end{align}
 where the second identity in \eqref{e:defgamma} comes from Fubini's theorem.

 In Lemma \ref{l:Poapprox} below we approximate
 $\Pr[R'_{t,\tau t,k}(B) \leq r_t] $ using Poisson approximation
 (by the Chen-Stein method)
 for the number of $Y$-points lying in the region $V_{t,r_t,k}$.
 Then in Lemma \ref{l:Rmeta} we approximate 
 $\Pr[R_{n,m,k}(B) \leq r_n] $ by a suitable
 coupling of Poisson and binomial point processes.
\begin{lemm}[Poisson approximation]
	\label{l:Poapprox}
	Suppose A1, A2 or A3 holds.
	Assume that $\gamma_t(B) = O(1)$
	as $t \to \infty$. Let $\tau \in (0,\infty)$. 
	Let $\eps >0$. Then
$$
	\sup_{\tau \in (\eps,1/\eps)}
	|\Pr[R'_{t,\tau t,k}(B) \leq r_t] - e^{-\tau \gamma_t(B)} | = O((\log t)^{1-d}).
$$
\end{lemm}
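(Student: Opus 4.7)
The plan starts from the tautology $\{R'_{t,\tau t,k}(B) \leq r_t\} = \{\cQ_{\tau t, B}(V_{t,r_t,k}) = 0\}$ combined with the Poisson void-probability formula: since $\cQ_{\tau t, B}$ is a Poisson process with intensity $(\tau t/|B|)\1_B\,dx$ independent of $\Po_t$, conditioning on $\Po_t$ gives
\[
\Pr[R'_{t,\tau t,k}(B) \leq r_t] = \E\bigl[e^{-\tau W_t}\bigr], \qquad W_t := (t/|B|)\,|V_{t,r_t,k} \cap B|,
\]
with $\E[W_t] = \gamma_t(B)$ by Fubini. Since $s \mapsto e^{-\tau s}$ has second derivative bounded by $\tau^2$ on $[0, \infty)$, a second-order Taylor expansion of this function about $\gamma_t(B)$, together with $W_t \geq 0$ to bound the remainder, yields
\[
\bigl|\E[e^{-\tau W_t}] - e^{-\tau \gamma_t(B)}\bigr| \leq (\tau^2/2)\,\Var(W_t),
\]
which is uniform for $\tau \in (\eps, 1/\eps)$. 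The lemma thus reduces to proving the variance bound $\Var(W_t) = O((\log t)^{1-d})$.

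By Fubini, $\Var(W_t) = (t/|B|)^2 \iint_{B \times B}(\pi_t(x,y) - p_t(x)p_t(y))\,dx\,dy$. When $\|y - x\| > 2r_t$, the events $\{x \in V_{t,r_t,k}\}$ and $\{y \in V_{t,r_t,k}\}$ involve $\Po_t$ restricted to the disjoint balls $B(x, r_t)$ and $B(y, r_t)$, so by independence of Poisson counts on disjoint sets the integrand vanishes there. For $\|y-x\| \leq 2r_t$ the naive bound $\pi_t(x,y) \leq p_t(x)$ is too weak by a factor of $\log t$; the key sharper estimate is
\[
\pi_t(x,y) \leq \Pr\bigl[\Po_t\bigl(A \cap (B(x,r_t) \cup B(y,r_t))\bigr) \leq 2k-2\bigr],
\]
obtained from the observation that $\Po_t(B(x,r_t)) < k$ and $\Po_t(B(y,r_t)) < k$ force at most $2k-2$ points in their union. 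In the bulk of $A$ the Poisson mean on the union equals $\lambda(2 - \rho(\|y-x\|/r_t))$, where $\lambda := t\theta_d f_0 r_t^d = \Theta(\log t)$ and $\rho(u) := |B(o,1) \cap B(u e_1, 1)|/\theta_d$ satisfies $\rho(0) = 1$ with $-\rho'(0) > 0$; Laplace's method applied to the angular integral extracts a factor $\Theta(\lambda^{-d})$, and so the bulk contribution to $\Var(W_t)$ sits comfortably below the target.

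The main obstacle is the boundary analysis. Near $\partial A$ the volume $|B(x,r_t) \cap A|$ drops towards $\theta_d r_t^d/2$ (Lemma~\ref{lemgeom1a}(i) together with a half-space approximation), so $p_t$ and $\pi_t$ are exponentially larger than in the bulk; the saving must come from the boundary tube having volume only $O(r_t|\partial A|)$. Under A1 (respectively A2) I approximate $\partial A$ locally by flat hyperplanes to leading order in $r_t$ using $C^2$ regularity (respectively polygonality), parametrise by the rescaled depth $a := \dist(x,\partial A)/r_t \in [0,1]$, and express the local Poisson mean on $B(x,r_t) \cap A$ as $\lambda(\tfrac12 + h(a)/\theta_d)$ with $h$ as in Lemma~\ref{lemexp}. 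Combining Laplace in the angular variable (which again produces a $\lambda^{-d}$ factor from the overlap structure around $\|y-x\|=0$) with depth integration via Lemma~\ref{lemexp} (invoked with appropriate $\alpha_0, \ell$ dictated by the $(2k-2)$-Poisson tail) produces a boundary-tube contribution to $\Var(W_t)$ that, after multiplication by $(t/|B|)^2$, matches the required $O((\log t)^{1-d})$. Under A3 the separation $\overline B \subset A^o$ eliminates boundary effects entirely and only the bulk estimate is required.
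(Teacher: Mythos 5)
Your reduction $|\Pr[R'_{t,\tau t,k}(B)\leq r_t]-e^{-\tau\gamma_t(B)}|\leq(\tau^2/2)\Var(W_t)$ via the conditional void probability and a second-order Taylor expansion of $s\mapsto e^{-\tau s}$ (using $W_t\geq 0$ to bound the remainder) is correct and genuinely different from the paper's route: the paper bounds the total variation distance between the count $\sum_{y\in\cQ_{\tau t}}\1\{\Po_t(B_{r_t}(y))<k\}$ and a Poisson variable via the Chen--Stein method (leading to $I_1(t)+I_2(t)$ at \eqref{e:defI1}--\eqref{e:defI2}). Your Jensen/variance route is more elementary and the paper itself alludes to it in Section \ref{sim-section}. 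However, your variance bound has a genuine gap: the inequality $\pi_t(x,y)\leq\Pr[\Po_t(B(x,r_t)\cup B(y,r_t))\leq 2k-2]$ is lossy by a factor of order $(\log t)^{k-1}$ when $\|y-x\|$ is small, and for $k\geq 2$ this kills the argument.

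To see this, write $B(x,r_t)\cup B(y,r_t)$ as the disjoint union $D_1\cup D_{12}\cup D_2$ with $D_{12}=B(x,r_t)\cap B(y,r_t)$, and put $N_i := \Po_t(D_i)$. Then $\pi_t(x,y)=\Pr[N_1+N_{12}<k,\,N_2+N_{12}<k]$, so the constraint $N_{12}<k$ is forced. In the bulk, as $u := \|y-x\|/r_t\downarrow 0$ one has $\E N_{12}\to\lambda := tf_0\theta_d r_t^d=\Theta(\log t)$ and $\E N_1,\E N_2\to 0$, hence $\pi_t(x,y)\to p_t(x)=\Theta(\lambda^{k-1}e^{-\lambda})$. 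Your bound, evaluated at $u=0$, is $\Pr[\text{Poisson}(\lambda)\leq 2k-2]=\Theta(\lambda^{2k-2}e^{-\lambda})$. Feeding this into your radial Laplace integral (which correctly extracts $\Theta(\lambda^{-d})$), multiplying by $(t/|B|)^2 r_t^d|B|$, and using $\gamma_t(B)=O(1)$ (so $te^{-\lambda}\lambda^{k-1}=O(1)$) and $tr_t^d=\Theta(\lambda)$, the bulk contribution to $\Var(W_t)$ works out to $O((\log t)^{k-d})$, not the required $O((\log t)^{1-d})$; for $k=1$ the two agree (and indeed for $k=1$ your bound is exact), but for $k\geq 2$ it fails. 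Your boundary sketch depends on the same $(2k-2)$-tail and so inherits the same loss.

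The fix is the sharper factorisation the paper uses. Since $\{\Po_t(B(x,r_t))<k\}$ is determined by $\Po_t$ on $B(x,r_t)$, and $\Po_t(B(y,r_t)\setminus B(x,r_t))$ is independent of it, one has $\pi_t(x,y)\leq p_t(x)\sum_{m=0}^{k-1}\Pr[\Po_t(B(y,r_t)\setminus B(x,r_t))=m]$. As $u\downarrow 0$ the second factor tends to $1$, so there is no spurious $\lambda^{k-1}$. Using Lemma~\ref{lemgeom1a}(ii) (and its polygonal analogue under A2) for a lower bound on $|A\cap B_{r_t}(y)\setminus B_{r_t}(x)|$, one obtains $t\int_{B(x,3r_t)\cap B}\sum_m q_{t,m}(y,x)\,dy=O((tr_t^d)^{1-d})$ uniformly in $x$, whence $\Var(W_t)\leq(t/|B|)^2\int_B\int_{B(x,2r_t)\cap B}\pi_t(x,y)\,dy\,dx=O(\gamma_t(B)\cdot(\log t)^{1-d})=O((\log t)^{1-d})$ directly, handling bulk and boundary together without a separate Laplace/depth analysis. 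With that substitution your overall plan closes.
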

\begin{proof}
	Let $W_t := \sum_{y \in \cQ_{\tau t}} \1 \{\Po_t(B_{r_t}(y)) < k\}.$
	Then $\Pr[R'_{t,\tau t, k}(B) \leq r_t] = \Pr[W_t =0]$. 

	Let $d_{{\rm TV}}$ denote total variation distance
	(see e.g. \cite{Pen03}).
	Then
		$|\Pr[ W_t  =0 ]- e^{\tau \gamma_t(B)}|
		\leq d_{{\rm TV}}(W_t , Z_{\tau \gamma_t(B)})$.
	Hence, by a similar argument to \cite[Theorem 6.7]{Pen03},
	\begin{align*}
		|\Pr[ W_t  =0 ]- e^{\tau \gamma_t(B)}|
		\leq 3 (I_1(t) + I_2(t)),
	\end{align*}
	where, with $p_t(x)$ and $\pi_t(x,y)$ defined at
	\eqref{e:defptpit}, we set
	\begin{align}
		I_1(t) & := \tau^2 (t/|B|)^{2}  \int_B \int_{B(x,3r_t) \cap B} 
		p_t(x) p_t(y)
		 dy dx;
		\label{e:defI1}
		\\
		I_2(t) & :=  \tau^2 (t/|B|)^{2} \int_B \int_{B(x,3r_t) \cap B} 
		\pi_t(x,y)
		 dy dx.
		\label{e:defI2}
	\end{align}
	Define the Borel  measure $\nu$ on $\R^d$ by
	\begin{align}
	\nu(\cdot):= \lambda_d(\cdot \cap A)/|A|,
		\label{e:defnu}
	\end{align}
	where $\lambda_d$ denotes $d$-dimensional Lebesgue measure.
	Under any of A1, A2 or A3 (using Lemma 
	\ref{l:bdyvol}
	in the case of A1),
	we can and do choose $\delta >0$ such
	that for all $y \in B$ and all $r \in (0,1]$
	we have $\nu(B_r(y) ) \geq 2 \delta   r^d$. 
	Hence, for all large enough $t$
	and  all $y \in B$  we have
	$$
	p_t(y) = \sum_{j=0}^{k-1} ((t \nu(B_{r_t}(x)))^j/j!)
	e^{-t \nu(B_r(x))} \leq \exp(-\delta tr_t^d).
	$$
	Since $ (t/|B|)\int_B p_t(x) dx =
	\gamma_t(B) $ which we assume is bounded,
	we have
	\begin{align}
		I_1(t) \leq \tau^2 |B|^{-1} (t \theta_d (3r_t)^d)
		e^{-\delta tr_t^d}
		(t/|B|)
		\int_B p_t(x)   dx 
		= O(e^{-(\delta/2) t r_t^d}). 
		\label{e:I1ub}
	\end{align}
	
	Now consider $I_2(t)$.
	For $x,y \in A$ let us write $x \prec y$ if $x$ is closer than
	$y$ to $\partial A$ (in the Euclidean norm), or if $x$ and
	$y$ are the same distance
	from $\partial A$ but $x$ precedes $y$ lexicographically.
	Since $\pi_t(x,y) {\bf 1}\{\|y-x\| \leq 3r_t\}$
	is symmetric in $x$ and $y$, we have
		\begin{align}
			\int_B \int_{B(x,3r_t)  \cap B } \pi_t(x,y) dy dx
			= 2 \int_B \int_{\{y \in B \cap B(x,3r_t):
			x \prec y\}}
	\pi_t(x,y) dy dx.
			\label{e:V21a}
		\end{align}
	By the independence properties of the Poisson process
		we have
		$$
		\pi_t(x,y) \leq p_t(x) \sum_{m=0}^{k-1} q_{t,m}(y,x),
		$$
		where we set $q_{t,m}(y,x) := \Pr[\Po_t(B_{r_t}(y) \setminus
		B_{r_t}(x) ) = m].$

		Suppose Assumption A1 or Assumption A3 applies.
		Set $\kappa_d:=8^{-d}\theta_{d-1}$.
	By 
		Lemma \ref{lemgeom1a},
		for all large enough $t$ and all $x,y \in
	B$ with $x \prec y$ and $\|y-x\| \leq 3r_t$,
	we have $\nu(B_{r_t}(y) \setminus B_{r_t}(x))
	\geq \kappa_d f_0 r_t^{d-1} \|y-x\|$. Moreover
	by Fubini's theorem
	 $\nu(B_{r_t}(y) \setminus B_{r_t}(x))
	 \leq \theta_{d-1}f_0 r_t^{d-1}\|y-x\|$.
	Hence for all $m \leq k-1$,
$$
		q_{t,m}(y,x) \leq (t f_0 \theta_{d-1}  r_t^{d-1}\|y-x\|)^m
		\exp (- \kappa_d f_0 t r_t^{d-1} \|y-x\|).
	$$
		Hence, setting $B_x := \{y \in B: x \prec y\}$ we have that
		\begin{align*}
			t 
			\int_{B_x \cap B(x,3r_t)} q_{t,m}(y,x)
			& \leq
			f_0^m \theta_{d-1}^m 
			t
			\int_{B(o,3r_t)}
			(t r_t^{d-1} \|y\|)^m
			 \exp(-\kappa_d f_0 tr_t^{d-1} \|y\| )
			dy \nonumber \\
			& =
			f_0^m \theta_{d-1}^m 
			t (tr_t^{d-1})^{-d} \int_{B(o,3tr_t^d)} 
			\|z\|^m \exp(- \kappa_d f_0 \|z\| ) 
			dz
			\\
			& \leq c(tr_t^d)^{1-d},
		\end{align*}
		for some constant $c$ depending only on
		$d, f_0 $ and $k$.
	Therefore
		\begin{align*}
			t^2 \int_B \int_{B(x,3r_t) \cap B} \pi_t(x,y) dy dx
			\leq  2 \left( t \int_B p_t(x) dx \right)
			ck  (tr_t^d)^{1-d}.
		\end{align*}
		Since the expression in brackets on the 
		right is $O(1)$
		by assumption, we thus have
		$I_2(t) =O((tr_t^d)^{1-d})
		= O((\log t)^{1-d})$.

		Now suppose we assume instead that Assumption A2 applies.
		  First we examine the situation where $x$ is not too close to the corners of $A$.
        Suppose that $\dist(x,\Phi_0(A))>Kr_t $, where 
	$\Phi_0(A)$ denotes the set of corners of $A$ and the constant
	$K$ will be made explicit later.
        We can assume that the corner of $A$ closest to $x$
        is formed by edges $e,e'$ meeting at the origin with angle
        $\alpha\in (0,2\pi) \setminus \{\pi\}$.
        We claim that, provided $K>4+8/|\sin \alpha|$,
        the disk $B(x,4r_t )$ intersects at most one of the two edges.
        Indeed, if it intersects both edges,
        then taking $w\in B(x,4r_t) \cap e, w'\in B(x,4r_t) \cap e'$
        we have $\|w-w'\|\le 8r_t $; hence $\dist(w,e')\le 8r_t $.
        Then, $\|w\| = \dist(w,e')/|\sin \alpha|\le 8r_t /|\sin \alpha|$.
        However, $\|w\|\ge (K-4)r_t $ by the triangle inequality,
        so we arrive at a contradiction. Also, for $t$ sufficiently large,
        non-overlapping edges of $A$ are distant more than $8r_t$ from each
	other. We have
        thus shown that
        if we take $K = 5 +(8/\min_i |\sin \alpha_i|)$, where $\{\alpha_i\}$ are
        the angles of  the  corners of $A$, then for  large $t$,
	no ball of radius $4r_t$ distant at least $Kr_t$ from the corners of
	$A$ can intersect two or more edges of $A$ at the same time.

        We have $B(x,r_t)\cup B(y,r_t)\subset B(x,4r_t)$.
	Hence, the argument leading to Lemma \ref{lemgeom1a},
	shows that
	$\nu(B(y,r_t) \setminus B(x,r_t))
	\geq \theta_{d-1} 8^{-d}  f_0  \|x-y\| r_t$.
  Using this,
        we can estimate the contribution to the double integral on the right
	side of \eqref{e:V21a} in the same way as we did under assumption A1. 

        Suppose instead that $x$ is close to a corner of $A$ and
        $\|x-y\|\le 3r_t$. The contribution to 
	the double integral on the right
	side of \eqref{e:V21a} 
	from such pairs $(x,y)$ is at most
	$c'' t^2 r_t^4  \exp(-\delta_1 tr_t^2)$
	where $c''$ depends only on $K$ and $\delta_1>0$ depends only on $A$.
	Therefore this contribution tends to zero, and
        the proof is now complete.
\end{proof}

\begin{lemm}[De-Poissonization]
	\label{l:Rmeta}
	Suppose A1, A2 or A3 holds.
	Let $m(n)$ be such that $m(n) = \Theta(n)$ as $n \to \infty$.
	Assume $\gamma_n(B) = O(1)$ 
	as $n \to \infty$. 
	Then
$$
	|\Pr[R_{n,m(n),k}(B) \leq r_n] - e^{-(m(n)/n) \gamma_n(B)} | =
	O((\log n)^{1-d}).
$$
\end{lemm}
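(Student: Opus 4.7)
The plan is to sandwich $R_{n,m(n),k}(B)$ between two Poissonized thresholds on a high-probability coupling event, apply Lemma \ref{l:Poapprox} on each side, and verify that the resulting Poisson approximations collapse to a single value. Set $\Delta_n := \lceil n^{1/2}(\log n)^d\rceil$, $t_\pm := n \pm \Delta_n$, $u_\pm := m(n) \pm \Delta_n$, and consider the coupling event
\begin{align*}
G_n := \{Z_{t_-} \le n \le Z_{t_+}\} \cap \{Z'_{u_-} \le m(n) \le Z'_{u_+}\},
\end{align*}
whose complement, by the standard Chernoff bound for Poisson tails, has probability $O(\exp(-(\log n)^{2d}/3)) = o((\log n)^{1-d})$. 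On $G_n$ the coupling built into the definitions of $\Po_t$ and $\cQ_{u,B}$ gives $\Po_{t_-} \subset \X_n \subset \Po_{t_+}$ and $\cQ_{u_-,B} \subset \Y_{m(n),B} \subset \cQ_{u_+,B}$; since adding $X$-points only helps $k$-coverage while adding $Y$-points only hurts it, monotonicity yields $R'_{t_+,u_-,k}(B) \le R_{n,m(n),k}(B) \le R'_{t_-,u_+,k}(B)$ on $G_n$, whence
\begin{align*}
|\Pr[R_{n,m(n),k}(B) \le r_n] - \Pr[R'_{t_\pm,u_\mp,k}(B) \le r_n]| \le \Pr[G_n^c] = o((\log n)^{1-d})
\end{align*}
for both choices of sign.

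I would then apply Lemma \ref{l:Poapprox} at $t = t_\pm$ with $r_t$ taken to equal the constant $r_n$. This is admissible because $m(n) = \Theta(n)$ puts $\tau_\pm := u_\mp/t_\pm$ into a compact subinterval of $(0,\infty)$, $t_\pm r_n^d = (1+o(1))nr_n^d = \Theta(\log t_\pm)$, and (as verified in the next paragraph) $\gamma_{t_\pm}(B) = O(1)$. The lemma then yields
\begin{align*}
\Pr[R'_{t_\pm,u_\mp,k}(B) \le r_n] = \exp(-\tau_\pm \gamma_{t_\pm}(B)) + O((\log n)^{1-d}),
\end{align*}
with $\gamma_{t_\pm}(B)$ defined by \eqref{e:defgamma} using the radius $r_n$, so it remains to show that $\tau_\pm\gamma_{t_\pm}(B) = \tau_n\gamma_n(B) + o((\log n)^{1-d})$ for $\tau_n := m(n)/n$.

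For this last step, writing $a(x) := \nu(B_{r_n}(x))$ and $p_t(x) = q(ta(x))$ with $q(s) := \sum_{j=0}^{k-1} s^j/j!\,e^{-s}$, the hypothesis $nr_n^d = \Theta(\log n)$ combined with the uniform lower bound $\nu(B_r(y)) \ge 2\delta r^d$ already exploited in the proof of Lemma \ref{l:Poapprox} keeps $ta(x) = \Theta(\log n)$ uniformly in $x \in B$ and $t \in [t_-,t_+]$. Perturbing $t$ from $n$ to $t_\pm$ shifts $ta(x)$ by $O((\log n)\Delta_n/n) = o(1)$, so an elementary expansion of $q$ gives $p_{t_\pm}(x)/p_n(x) = 1+o(1)$ uniformly in $x$, whence $\gamma_{t_\pm}(B) = (1+o(1))\gamma_n(B)$; together with $\tau_\pm - \tau_n = O(\Delta_n/n)$ and the hypothesis $\gamma_n(B) = O(1)$, this yields the required bound and closes the sandwich. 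The main obstacle I expect is precisely this uniform-in-$x$ comparison of $p_{t_\pm}(x)$ with $p_n(x)$, which is most delicate near $\partial A$ where $a(x)$ is smallest; but the lower bound on $\nu(B_{r_n}(x))$ keeps $ta(x)$ of order $\log n$ everywhere, keeping the perturbation controlled.
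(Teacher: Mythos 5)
Your proof is correct and shares the same backbone as the paper's — the embedded binomial-in-Poisson coupling together with Lemma~\ref{l:Poapprox} — but replaces the paper's intermediate step by a two-sided sandwich. The paper compares $R_{n,m(n),k}(B)$ with a \emph{single} Poissonised threshold $R'_{n^+,m^-,k}(B)$ (with $n^\pm=n\pm n^{3/4}$, $m^\pm=m\pm m^{3/4}$), bounding the discrepancy by $\Pr[W\neq W']$, where $W$ and $W'$ count uncovered $Y$-points in the binomial and Poissonised settings; this requires the event decomposition $\{W\neq W'\}\subset E_1\cup E_2\cup E_3$ and explicit estimates for $\Pr[E_2]$ (a previously uncovered $Y$-point becomes covered by the extra $X$-points in $\Po_{n^+}\setminus\Po_{n^-}$) and $\Pr[E_3]$ (an extra $Y$-point in $\cQ_{m^+}\setminus\cQ_{m^-}$ is uncovered). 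Your sandwich $R'_{t_+,u_-,k}(B)\le R_{n,m(n),k}(B)\le R'_{t_-,u_+,k}(B)$ on $G_n$ dispenses with the $E_2$, $E_3$ estimates entirely, trading them for a second invocation of Lemma~\ref{l:Poapprox} and the comparison $\tau_\pm\gamma_{t_\pm}(B)=\tau_n\gamma_n(B)+o((\log n)^{1-d})$, which is a Fubini-level expansion essentially the same as the paper's $\gamma'_n=\gamma_n(B)(1+o(1))$ step. This is a mild but real simplification. Two small remarks: (i) the displayed bound $|\Pr[R_{n,m(n),k}(B)\le r_n]-\Pr[R'_{t_\pm,u_\mp,k}(B)\le r_n]|\le\Pr[G_n^c]$ is stronger than what the sandwich gives directly — it delivers only a one-sided inequality for each sign — but since you then show both Poissonised probabilities equal $e^{-\tau_n\gamma_n(B)}+o((\log n)^{1-d})$, the sandwich still closes; (ii) applying Lemma~\ref{l:Poapprox} at $t=t_\pm$ with the radius $r_n$ in place of $r_{t_\pm}$ is admissible precisely because the implied constants there depend only on the constants in $tr_t^d=\Theta(\log t)$, and the paper makes the same implicit move by applying the lemma at $n^+$ with radius $r_n$.
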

\begin{proof}
	Write $m$ for $m(n)$. Set $n^+ := n+ n^{3/4}$, $n^- := n- n^{3/4}$
	and $m^+:= m+ m^{3/4}, m^- := m-m^{3/4}$.
	Set $$
	W := \sum_{y \in \Y_m} \1 \{\X_n(B_{r_n}(y)) < k\}; ~~~
	 W' := \sum_{y \in \cQ_{m^-,B}} \1 \{\Po_{n^+}(B_{r_n}(y)) <k\}.
	 $$
	 Set $\gamma'_n: =
		 (n^-/|B|) \E[|V_{{n^+},r_n,k} \cap B|]$, where
		 $V_{n,r,k}$ was defined at \eqref{e:Vdef}.
	 Then, with the measure $\nu$ defined at \eqref{e:defnu},
	 \begin{align}
		 \gamma'_n & =
		 \frac{n^-}{|B|} 
		 \int_B \left( e^{-n \nu(B(x,r_n)) - n^{3/4} \nu(B(x,r_n))}
		 \sum_{j=0}^{k-1} (n (1+n^{-1/4}))^j\nu(B(x,r_n))^j/j!
		 \right) dx
		 \nonumber
		 \\
		 & = \gamma_n(B)(1+ O((\log n)^{1/d}n^{-1/4})).
		 \label{e:gam'}
	 \end{align}
	 By Lemma \ref{l:Poapprox}, 
		 $|\Pr[R'_{n^+,m^-,k} (B) \leq r_n]-e^{-(m^-/n^+)\gamma'_n}|
	 = O((\log n)^{1-d})$, and hence by \eqref{e:gam'},
	  $|\Pr[
		  R'_{n^+,m^-,k}(B) \leq r_n]
	  -e^{-(m^-/n^+)\gamma_n(B)}|
	 = O((\log n)^{1-d})$. 
	 Note also that $|(m/n)-(m^-/n^+)|= O(n^{-1/4})$
	 so that $|e^{-(m/n)\gamma_n(B)}-e^{-(m^-/n^+)\gamma_n(B)}|
	 = O(n^{-1/4})$.
	 Also
	 \begin{align*}
		 |	 \Pr[R_{n,m(n),k}(B) \leq r_n] - \Pr[R'_{n^+,m^-,k}(B)
		 \leq r_n] |
		 \leq \Pr[W \neq W'].
	 \end{align*}
	 We have the event inclusion
		 $\{W \neq W'\} \subset E_1 \cup E_2 \cup E_3$, where,
		 recalling the definition of $(Z_t)_{t \geq 0}$ in
		 Section \ref{SecIntro}, we define the events
	 \begin{align*}
		 E_1 & := \{Z_{m^-} \leq m \leq Z_{m^+}\}^c \cup\{Z'_{n^-} \leq n \leq
	 Z'_{n^+}\}^c;
		 \\
		 E_2 & := \{ \exists y \in \cQ_{m^-}: 
		 \Po_{n^-}(B(y,r_n)) < k,
		 (\Po_{n^+} \setminus \Po_{n^-}) (B(y,r_n)) \neq
		 0\};
		 \\
		 E_3 & :=
		 \{ \exists y \in \cQ_{m^+} \setminus \cQ_{m^-}:
		 \Po_{n^-}(B(y,r_n)) < k \}.
	 \end{align*}
	 By Chebyshev's inequality $\Pr[E_1] = O(n^{-1/2})$.
	 Also by a similar calculation to \eqref{e:gam'},
	 $(n/|B|)\E[|V_{n^-,r_n,k}\cap B|] =
	 \gamma_n(B)(1+ O((\log n)n^{-1/4}))$, and
	 \begin{align*}
		 \Pr[E_2] \leq \E[|V_{n^-,r_n,k} \cap B|/|B|] m^-  (2 n^{3/4})
		   f_0 \theta_d  r_n^d = O(n^{-1/4} \log n).
	 \end{align*}
	Similarly $\Pr[E_3] = 2m^{3/4} \E[|V_{n^-,r_n,k} \cap B|/|B|] =
	O(n^{-1/4})$.
	Combining these estimates gives the result.
\end{proof}

\section{Proof of theorems}
\label{s:proofs}
\allco

\subsection{Proof of Theorem \ref{Hallthm}}
Recall the definition of $\gamma_{t}(B)$ at \eqref{e:defgamma}.
For each theorem,
we need to find $(r_t)_{t \geq 0}$
such that
$\gamma_t(B)$ 
converges as $t \to \infty$; 
we can then apply
Lemmas \ref{l:Poapprox} and \ref{l:Rmeta}. 
We are ready to do this for Theorem \ref{Hallthm} without further ado. 
Recall that $f_0 :=1/|A|$. 

\begin{proof}[Proof of Theorem \ref{Hallthm}]
Fix $k \in \N$, $\beta \in \R$. For all $t>0$ define
	$r_t \in [0,\infty)$ by 
\bean
	t f_0 \theta_d r_t^d  = \max(\log t + (k-1) \log \log t + \beta,0).
\eean
	Set $j=k-1$. Since we assume here that $\overline{B} \subset A^o$,
	for $t$ large we have for all $x \in B$ that
	$B(x,r_t) \subset A$, and hence
	 by \cite[Theorem 1.3]{LP}, $\Po_t(B(x,r_t))$
	is Poisson with parameter $t |B(x,r_t)|/|A|
	= t f_0 \theta_d r_t^d$. 
	By \eqref{e:Vdef}, $\Pr[x \in V_{t,r_t,k}] = \Pr[\Po_t(B(x,r_t)) < k]$,
	so
	as $t \to \infty$ we have
	uniformly over $x \in B$ that
	\begin{align}
	\Pr[x \in V_{t,r_t,k}]
		& = e^{-tf_0\theta_dr_t^d}( (t f_0 \theta_dr_t^d)^j/j!)  
	(1+ j (f_0 \theta_d r_t^d)^{-1} + O((\log t)^{-2}))
		\label{e:fromPo}
		\\
		& = (1/j!) t^{-1} (\log t)^{-j} e^{-\beta}
		(\log t + j \log \log t + \beta)^j
		\nonumber
		\\
		& ~~~~~~~~~~~~~~\times 
		(1+ j (\log t + j \log \log t + \beta)^{-1} +O((\log t)^{-2}))
		\nonumber
		\\
		& = \frac{e^{-\beta}}{j!t}
		\Big( 1+ \frac{j \log \log t + \beta}{\log t} \Big)^j
		\Big(1 + j (\log t)^{-1} \Big( 1+ \frac{j \log \log t + \beta}{
			\log t} \Big)^{-1}
			\nonumber \\
		&~~~~~~~~~~~~~~~~~~~~~~~~~~~~~~~
		~~~~~~~~~~~~~~~~~~~~~~~~~~~~~~
		+ O \big((\log t)^{-2} \big) \Big),
		\nonumber
	\end{align}
	where the $O(\cdot)$ term is zero for $k=1$ or $k=2$.
	Using \eqref{e:defgamma},
	we obtain
	by standard power series expansion that
as $t \to \infty$
we have
	\begin{align}
		\gamma_t(B) & 
		= \frac{e^{-\beta}}{j!} 
		\Big(  1 + \frac{j^2 \log \log t}{\log t}
		+ \frac{j(1+\beta)}{\log t} + O\big( \big( 
		\frac{\log \log t}{\log t}
		\big)^2 \big)
		\Big). \label{e:power}
	\end{align}
	Hence by Lemma \ref{l:Poapprox},
	given $\tau \in (0,\infty)$ as $t \to \infty$
	 we have
	\begin{align}
		\Pr[R'_{t,\tau t,k}(B) \leq r_t ]   =
		\exp \Big(-\frac{\tau e^{-\beta}}{j!}
		\Big(  1 + \frac{j^2 \log \log t}{\log t}
		+ \frac{j(1+\beta)}{\log t} + O\big( \big( 
		\frac{\log \log t}{\log t}
		\big)^2 \big)
		\Big) \Big)
		\nonumber \\
		~~~~~~~~~~~~~~~~~~~~~~~ 
		~~~~~~~~~~~~~~~~~~~~~~~ 
		~~~~~~~~~~~~~~~~~~~~~~~ 
		+ O((\log t)^{1-d})
		\nonumber \\
		= \begin{cases}
			e^{- \tau e^{-\beta}/j!}
			\left( \exp \big( - \frac{\tau e^{-\beta} j^2 \log \log t}{j!
			\log t} \big)
			+ O\big( (\log t)^{-1} \big)
			\right)
			~~~~~~~~~~~~~~~~~ ~~~ {\rm if}~d=2
			\\
			e^{- \tau e^{-\beta}/j!}
			\left( \exp \big(- \frac{\tau e^{-\beta} j^2 
			\log \log t}{j!
		\log t}
			- \frac{\tau e^{-\beta} j(1+\beta)}{j! \log t} \big)
		+ O\big( \big(\frac{\log \log t}{\log t} \big)^2 \big)
		\right)
			~~~{\rm if}~d \geq 3,
		\end{cases}
		\label{e:Bdetail}
	\end{align}
	yielding 
	(\ref{0114a}).
	Similarly, given also
	$m: \N \to \N$ satisfying $\tau_n:= m(n)/n \to \tau $ as
	$n \to \infty $, 
 by Lemma	\ref{l:Rmeta} and \eqref{e:power}
 we have
		as $n \to \infty$ that
		\begin{align}
			& \Pr[R_{n,m(n),k}(B) \leq r_n ]  
			\nonumber  \\
			 & = \begin{cases}
			e^{- \tau_n e^{-\beta}/j!}
		\exp \Big( - \frac{\tau_n e^{-\beta} j^2 \log \log n}{j!
		\log n}
			\Big)
			+ O\big( (\log n)^{-1} \big)
			~~~~~~~~~~~~~~~~~~ ~~~ {\rm if}~d=2
			\\
			e^{- \tau_n e^{-\beta}/j!}
		\exp \Big( - \frac{\tau_n e^{-\beta} j^2 \log \log n}{j!
		\log n}
		- \frac{\tau_n e^{-\beta} j(1+\beta)}{j! \log n}
		\Big)
		+ O\big( \big(\frac{\log \log n}{\log n} \big)^2 \big)
			~~~{\rm if}~d \geq 3,
		\end{cases}
			\label{e:BBdet}
	\end{align}
	and
	\eqref{0829a}
	follows.
\end{proof}

Under assumption A1 or A2 (with $B=A$),
it takes more work than
in the preceding proof to determine
$r_t$ such that $\gamma_t(A)$
tends to a finite limit.
The right choice turns out to be as follows.
Let $\beta \in \R$ and let $r_t=r_t(\beta)\ge 0$ be given by
\begin{equation}
	f_0 t\theta_d r_t^d = 
	\max \Big( \big(2-
	2/d \big) \log t + 
	\big( 
	2k -4 + 2/d \big)
	J(d,k)
	\log\log t + \beta, 0 \Big),
	\label{e:rboth}
\end{equation}
where $f_0 := |A|^{-1}$ and $J(d,k) :=
	\1_{\{d\ge 3 \text{ or } k\ge 2\}}$. 
We show in  the next subsection that this choice of $r_t$ works. 

\subsection{Convergence of $t \EE[|V_{t,r_t,k}|]$}
	Recall $p_{t}(x)$  at \eqref{e:defptpit}.  By Fubini's theorem,
	as at \eqref{e:defgamma}, we have
\begin{align}\label{e:E[F_n]}
\EE[|V_{t,r_t,k}|]= \int_A p_{t}(x) dx.
\end{align} 
	Recalling the notation $\partial A^{(r)}$ and
	$A^{(-r)}$ from the
	start of Section \ref{s:prep}, we refer to
	 the region $ A^{(-r_t)}$  
	as the {\em bulk}, and the region
$\mathsf{Mo}_{t} := \partial A^{(r_t)}$
as the {\em moat}.

\begin{prop}[Convergence of the expectation when $d \geq 3$]
	\label{p:average3d+}
	Suppose Assumption A1 applies with $d \geq 3$.
Fix $\beta \in\R$ and let
	$r_t := r_t(\beta), V_{t,r,k} $ and $ c_{d,k}$
	be as given in \eqref{e:rboth},
		\eqref{e:Vdef} and \eqref{e:defcdk}.
	Then as $t \to \infty$,
\begin{align}
	t \EE[|V_{t,r_t,k}|] & =  e^{-\beta/2} c_{d,k} f_0^{-1/d} |\partial A|
	\Big(1 + \frac{ (k-2 + 1/d)^2  
	\log \log t}{(1-1/d)\log t}
	\nonumber \\ 
	& ~~~~~~~~~~~~~  + \frac{ (k-2 + 1/d)  \beta + 4k -4}{(2-2/d)
	\log t} 
	\Big)
	+ O\big( (\log t)^{\eps -2} \big). 
	\label{e:EFconv}
\end{align}
\end{prop}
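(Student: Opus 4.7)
The plan is to mimic the proof of Proposition \ref{p:average2d} under Assumption A1, simplified by the fact that for $d\ge 3$ the bulk contribution is now of strictly lower order than the boundary contribution (so it is absorbed into the remainder), and complicated by the richer stratification of the approximating polytope in higher dimension. By Fubini's theorem, $\EE[|V_{t,r_t,k}|]=\int_A p_t(x)\,dx$ with $p_t$ as in \eqref{e:defptpit}. I would fix $\alpha\in(1/(2d),1/d)$ and build a polytope $A_t\subset A$ by triangulating each component of $\partial A$ into $(d-1)$-simplices of diameter $t^{-\alpha}$; the Taylor argument that produced \eqref{e:dh} extends to give $\dist(\partial A_t,\partial A)=O(t^{-2\alpha})$, so that $t^{-2\alpha}\ll r_t\ll t^{-\alpha}$. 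Then $A$ is partitioned as in the planar case into the bulk $A_t^1:=\{x\in A_t:\dist(x,\partial A_t)\ge r_t+Kt^{-2\alpha}\}$, the moat $\mathsf{Mo}_t:=A_t\setminus A_t^1$, and the railing $\mathsf{Rai}_t:=A\setminus A_t^2$, with $K$ chosen so that $B(x,r_t)\subset A$ for all $x\in A_t^1$.

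Only the moat contributes at leading order. For the bulk, $p_t(x)=e^{-s_0}\sum_{j=0}^{k-1}s_0^j/j!$ with $s_0:=tf_0\theta_d r_t^d$, and \eqref{e:rboth} gives $p_t(x)=\Theta(e^{-\beta}t^{-(2-2/d)}(\log t)^{3-k-2/d})$, so the bulk contribution to $t\int_A p_t$ is $O(t^{2/d-1}(\log t)^{3-k-2/d})$, which is polynomially small in $t$. For the railing, Lemma \ref{lemgeom1a}(i) gives $p_t(x)\le e^{-\kappa s_0}$ for some $\kappa$ close to $1/2$, which combined with $|\mathsf{Rai}_t|=O(t^{-2\alpha})$ is again polynomially negligible. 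The moat is decomposed further into prisms $\mathsf{Pri}_{t,i}$, one per $(d-1)$-face $F_i$ of $\partial A_t$ (perpendicular thickness $r_t$), plus a corner region $\mathsf{Cor}_t$ defined as the $(Cr_t)$-neighbourhood of the codim-$\ge 2$ skeleton of $\partial A_t$. Since there are $O(t^{(d-1)\alpha})$ top faces and a geometric sum over the lower-dim strata yields $|\mathsf{Cor}_t|=O(t^\alpha r_t^2)$, combined with $p_t(x)\le t^{-(1-1/d)+o(1)}$ on $\mathsf{Cor}_t$ (from the near-flat dihedral angles $\pi-O(t^{-\alpha})$ forced by $C^2$ regularity), one obtains $t\int_{\mathsf{Cor}_t}p_t\,dx=O(t^{\alpha-1/d+o(1)}(\log t)^{2/d})$, which is polynomially small because $\alpha<1/d$.

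This leaves the prisms. For $x\in\mathsf{Pri}_{t,i}$ at perpendicular distance $ar_t$ from $F_i$, the $C^2$ approximation together with $t^{-2\alpha}\ll r_t$ gives
\begin{align*}
	|B(x,r_t)\cap A|=r_t^d\bigl(\theta_d/2+h(a)\bigr)\bigl(1+o(1/\log t)\bigr),
\end{align*}
so that with $s:=tf_0r_t^d=s_0/\theta_d$ and $\Lambda_a:=s(\theta_d/2+h(a))$,
\begin{align*}
	p_t(x)=\frac{\Lambda_a^{k-1}}{(k-1)!}e^{-\Lambda_a}\Bigl(1+\frac{k-1}{\Lambda_a}+O\bigl((\log t)^{-2}\bigr)\Bigr).
\end{align*}
I would then write $t\int_{\mathsf{Pri}_{t,i}}p_t\,dx=t|F_i|r_t\int_0^1 p_t\,da$ and apply \eqref{e:intest2} of Lemma \ref{lemexp} with $\alpha_0=\theta_d/2$ and $j=k-1$ to compute the $a$-integral, use $tr_t=t^{1-1/d}s_0^{1/d}(f_0\theta_d)^{-1/d}$ and $e^{-s_0/2}=e^{-\beta/2}t^{-(1-1/d)}(\log t)^{-(k-2+1/d)}$ from \eqref{e:rboth}, and power-series expand $s_0^{k-2+1/d}$ in $(\log\log t)/\log t$ and $1/\log t$. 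The constants collapse via the identity
\begin{align*}
	\frac{\theta_d^{1-1/d}(2-2/d)^{k-2+1/d}}{(k-1)!\,2^{k-1}\theta_{d-1}}=c_{d,k}
\end{align*}
(cf.\ \eqref{e:defcdk}), and the two explicit corrections in \eqref{e:EFconv} emerge respectively from the expansion of $s_0^{k-2+1/d}$ and from the factor $1+4(k-1)/s_0$ produced by \eqref{e:intest2}. Summing over $i$ using the higher-dimensional analogue of \eqref{e:sumBR}, namely $\sum_i|F_i|=|\partial A|+O(t^{-\alpha})$, then completes the proof of \eqref{e:EFconv}.

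The main obstacle will be the corner estimate in higher codimension: unlike in $d=2$ where the corner regions are disks about isolated vertices, here one must iterate over the full hierarchy of codim-$2,3,\dots,d$ faces of $A_t$, bounding $p_t$ by $e^{-\kappa_j s_0}$ with $\kappa_j>1/2$ on each stratum via a dihedral-angle argument that itself leans on the $C^2$ regularity of $\partial A$. The constraint $\alpha\in(1/(2d),1/d)$ is exactly what makes both the corner contribution polynomially small in $t$ and the railing-approximation error $\partial A_t\approx\partial A$ finer than the target remainder $O((\log t)^{\epsilon-2})$ appearing in \eqref{e:EFconv}.
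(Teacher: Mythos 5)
Your outline tracks the paper's estimates quite closely --- the bulk, railing, and corner bounds, the application of Lemma~\ref{lemexp} with $\alpha_0=\theta_d/2$, the constant identity collapsing to $c_{d,k}$, and the origin of the two explicit correction terms (one from expanding $s^{k-2+1/d}$, one from the $1 + (k-1)/\Lambda$ factor processed by \eqref{e:intest2}) are all exactly right. But there is a genuine methodological gap at the foundation: you propose to ``build a polytope $A_t\subset A$ by triangulating each component of $\partial A$ into $(d-1)$-simplices of diameter $t^{-\alpha}$'', treating this as a straightforward generalization of the $d=2$ construction. It is not. In $d=2$, $\partial A$ is a disjoint union of curves, and one simply marches along each curve placing points at arclength spacing $t^{-\alpha}$; in $d\geq 3$ there is no canonical way to produce a globally consistent simplicial mesh of a $C^2$ hypersurface with controlled facet diameter and shape regularity, and a naive attempt would require gluing local triangulations compatibly across overlaps, which is a nontrivial exercise in its own right. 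The paper explicitly avoids this: it declares at the start of Step~1 that it will \emph{not} approximate $A$ globally by a polytope, but will instead cover $\partial A$ by finitely many ``nice'' patches $\Gamma_1,\dots,\Gamma_J$ (each a graph over a ball in $\R^{d-1}$, using the compactness argument around \eqref{bycompactness}), approximate each $\Gamma_j$ locally by a piecewise-affine surface $\Gamma_{j,t}$ built from a lattice triangulation of the base, and then combine the patch contributions at the end via Bonferroni bounds together with the estimate \eqref{0131d} on the thin overlap regions $\Gamma_i^{(2r_t)}\cap\Gamma_j^{(2r_t)}$.

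So what your proposal is missing concretely: (a) a proof that your global polytope $A_t$ exists with the required properties, and (b) absent that, the local-patch-plus-Bonferroni argument that the paper uses instead. Everything downstream of the construction in your write-up is sound (your moat/prism/corner decomposition and the $t^{\alpha-1/d+o(1)}$ corner bound via dihedral angles match \eqref{e:error_corn}; your railing and bulk bounds match \eqref{e:railest} and \eqref{e:bulk3+}; the leading term computed via Lemma~\ref{lemexp} is correct). If you want to run the argument globally you would need to supply a meshing lemma; otherwise you should follow the paper's local route and note that the sum $\sum_i|\mathsf{BPri}_{t,i}|$ then reconstructs $|\Gamma_j|$ for each patch, with the pieces then summed over $j$ and corrected for overlaps.
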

To prove this,
	we shall
 investigate separately the contributions to the integral in
 the right hand side of
	\eqref{e:E[F_n]}
 from the the different regions $A^{(-r_t)}$ and 
 $\mathsf{Mo}_t$ 
	of the set $A$
	(it turns out  that when $d \geq 3$, the main contribution
	always comes from the moat regardless of $k$.) To avoid repeating ourselves later on, we
	shall deal with 
 $\mathsf{Mo}_t$ in a manner that covers the case $d=2$ as well.


 \begin{lemm}[Contribution of the moat]
	 \label{l:moat}
	Suppose Assumption A1 applies with $d \geq 2$.
Fix $\beta \in\R$ and let
	$r_t := r_t(\beta)$
	be  given by \eqref{e:rboth}.
Set
	 $J'(d,k) :=1 - J(d,k)$.
		Then
\begin{align}
	t  \int_{\mathsf{Mo}_{t}} p_{t}(x) dx = &
	  c_{d,k} f_0^{-1/d}e^{-\beta/2} |\partial A|
	(\log t)^{-\frac12 J'(d,k)}
	 \Big(1+
	 \frac{(k-2+1/d)^2 J(d,k) \log \log t}{(1-1/d) \log t}
	 \nonumber \\
	 &
	 ~~~~~ + \frac{ (k-2+ 1/d) \beta + \ 4k -4}{(2-2/d) \log t}
	 + O\big( (\log t)^{\eps-2} \big)
	 \Big)
	 .
	\label{e:prisint}
\end{align}
 \end{lemm}
 \begin{proof}
Given $t >0$,  $x \in A$ set
 $\mu_t(x):= t f_0 |B_{r_t}(x) \cap A|$ and
	 let $a(x) := \dist(x,\partial A)$. Then by \eqref{e:defptpit},
	 $p_t(x) = \Pr[Z_{\mu_t} <k]$, where $Z_u \sim$ Poisson$(u)$.
	 Also for $t$ large we have $\mu_t(x) \geq 1$ for all $x \in A$.
	 Hence, similarly to \eqref{e:fromPo} we have uniformly over $x \in A$ that
	 \begin{align}
		 p_{t}(x) & =
		 ((k-1)!)^{-1}  e^{- \mu_{t}(x) } \mu_{t}(x)^{k-1} 
		 (1+ ((k-1)/ \mu_t(x)) + O((\mu_t(x))^{-2})).
	 \label{e:PoMass}
	 \end{align}

 For $a \in (0,1]$ set
$\Lambda_{t,a}:= tf_0  r_t^d(\frac12 \theta_d + h(a))$,
	 with $h(\cdot)$ defined at \eqref{e:defh}.
	 By Lemma \ref{l:bdyvol} and \eqref{e:rboth},
we have that 
$$
\sup_{x \in \mathsf{Mo}_t} |\mu_t(x) - \Lambda_{t,a(x)/r_t} |
= O(t r_t^{d+1}) = O \Big( (\log t)^{(d+1)/d} t^{-1/d} \Big)
$$
where we have used also 
the fact that $tr_t^d = \Theta (\log t)$.
	 Also 
$\Lambda_{t,a}=\Theta(\log t)$ uniformly over $a\in[0,1]$.
Hence,
for each $x\in \mathsf{Mo}_{t}$,
	 by \eqref{e:PoMass}
we have
\begin{align*}
	p_t(x) =  ((k-1)!)^{-1}  e^{- \Lambda_{t,a(x)/r_t} } 
	\Lambda_{t,a(x)/r_t}^{k-1} (1+( (k-1)/\Lambda_{t,a(x)/r_t}) +
	O((\log t)^{-2})),
\end{align*}
where the constant in the $O$ term is independent of $x$.
Then by Proposition \ref{thm:cov}, 
\begin{align*}
	t \int_{\mathsf{Mo}_{t}} p_{t}(x)dx
	= \left( \frac{t |\partial A|(1+ O(r_t))}{(k-1)!} \right) r_t
	\int_0^1 e^{-\Lambda_{t,a}} \Lambda_{t,a}^{k-1}
	\Big(1+ \frac{k-1}{\Lambda_{t,a}} + O\big((\log t)^{-2}\big) \Big)da.
\end{align*}
By \eqref{e:rboth}, 
for $t$ large
$e^{-tf_0\theta_d r_t^d/2}= t^{-(1-1/d)} (\log t)^{(2-k-1/d)J(d,k)}
e^{-\beta/2}$, and setting $s = tf_0r_t^d$ we have
$\Lambda_{t,a}= s(\frac12 \theta_d + h(a))$, so that
\begin{align*}
	t \int_{\mathsf{Mo}_{t}} p_{t}(x)dx
	& = \Big( \frac{ t |\partial A| r_t}{(k-1)!} \Big)
	t^{-(1-1/d)} 
	(\log t)^{(2-k-1/d)J(d,k)} e^{-\beta/2} s^{k-1} \\
	& \times \int_0^1 e^{-sh(a)} \left( \frac{\theta_d}{2} + h(a)
	\right)^{k-1} \left(1+ \frac{k-1}{s(\frac{\theta_d}{2}+ h(a))}
	+ O((\log t)^{-2}) \right) da.
\end{align*}
Hence by Lemma \ref{lemexp} with $\alpha_0= \theta_d/2$ and $j=k-1$, 
given $\eps >0$ we have
\begin{align*}
	& t \int_{\mathsf{Mo}_{t}} p_{t}(x)dx
	= \Big( \frac{ t |\partial A| r_t}{(k-1)!} \Big)
	t^{-(1-1/d)} 
	(\log t)^{(2-k-1/d)J(d,k)} e^{-\beta/2} s^{k-1} \\
	& ~~~~~~~~~~~~~~~~~~~~~~~~~~~~~~~  
	\times  \frac{(\theta_d/2)^{k-1}}{\theta_{d-1}s}
	\left( 1 + 
	4(k-1)\theta_d^{-1} 
	s^{-1}
	+
	O(s^{\eps -2}) 
	\right) \\
	& = \frac{ f_0^{-1/d} \theta_d^{k-1} |\partial A|
	e^{-\beta/2}
	}{(k-1)!2^{k-1} \theta_{d-1}} \left( \frac{s}{\log t} \right)^{k-2+1/d}
	(\log t)^{-\frac12 J'(d,k)}
	( 1 + 4(k-1)\theta_d^{-1}  s^{-1} + O(s^{\eps -2})).
\end{align*}
By \eqref{e:rboth}, for $t$ large we have 
$$
s= \frac{(2-2/d) \log t}{\theta_d} \left( 1 + 
\frac{(k -2 + 1/d) J(d,k) \log \log t + \beta /2 }{(1-1/d)\log t}
\right).
$$
Therefore
\begin{align}
	t  \int_{\mathsf{Mo}_{t}} p_{t}(x) dx =
	 \frac{ f_0^{-1/d} \theta_d^{k-1} |\partial A|
	e^{-\beta/2}
	}{(k-1)!2^{k-1} \theta_{d-1}} \left( \frac{2-2/d}{\theta_d} \right)^{k-2+1/d}
	(\log t)^{-\frac12 J'(d,k)}
	~~~~~~~~~~~~~~~
	\nonumber \\
	\times \left( 1 + \frac{(k-2+1/d)  ((k-2 + 1/d)J(d,k) \log \log t
	+  \beta/2)}{ (1- 1/d)\log t} 
	+O\big(\big(
	\frac{\log \log t}{\log t}\big)^2\big) \right)
	\nonumber \\
	 \times \left( 1 + \frac{4k-4 }{(2-2/d) \log t } +
	 O \big( (\log t)^{\eps-2}   \big)\right),
	 \nonumber
\end{align}
and therefore by the definition \eqref{e:defcdk} of $c_{d,k}$,
	\eqref{e:prisint} holds.
	\end{proof}
%

\begin{proof}[Proof of Proposition \ref{p:average3d+}]
	To deal with the bulk, we use \eqref{e:PoMass},
	noting that for $x \in A^{(-r_t(\beta))}$ we have
	$\mu_t(x) = t f_0 \theta_d r_t^d$. Hence for such $x$ we have
	$$
	p_t(x) = O((t r_t^d)^{k-1} e^{-t f_0 \theta_d r_t^d}) =
	O(
	( \log t)^{k-1}
	t^{-(2-2/d)} (\log t)^{4- 2k-2/d}
	),
	$$
	where the constant  in the $O$ term does not depend on $x$,
so that
\begin{align}
t
	\int_{A^{(-r_t)}} p_t(x)dx
	= O(	(\log t)^{2} t^{-1+ 2/d }).
	\label{e:bulk3+}
\end{align}

 Using
 (\ref{e:prisint}), (\ref{e:bulk3+}) and  
 \eqref{e:E[F_n]},
 we obtain (\ref{e:EFconv}) for $d \geq 3$ as required.
\end{proof}



\begin{prop}[Convergence of the expectation
	when $d=2$]	
	\label{p:average2d}
	Suppose $d=2$ and A1 or A2 applies.
	Fix $\beta \in\R$,
	and let $r_t, V_{t,r,k}$ be as given in 
	\eqref{e:rboth}
	and
	\eqref{e:Vdef}.
	Then as $t \to \infty$,
	\begin{align}
		t\EE[|V_{t,r_t,k}|] =  
		\begin{cases}
			|A| e^{-\beta} + |\partial A|e^{-\beta/2} 
			\frac{\sqrt{\pi/f_0}}{2} (\frac{1}{\sqrt{\log t}} + O((\log t)^{-3/2}) ) & \mbox{ if } k= 1 \\
			|A| e^{-\beta} + |\partial A|e^{-\beta/2} 
			\frac{\sqrt{\pi/f_0}}{4}
			\Big( 1 +
			 \frac{\log\log t}{2 \log t} 
			 \Big)
			 + \frac{|A|e^{-\beta}\log \log t}{\log t}
			 \\
			~~~~~~~~~~~~~~~~~~~~~~~~~~~~
			~~~~~~~~~~~+ O((\log t)^{-1}) )
			 & \mbox{ if } k=2 
			 \\
			|\partial A|e^{-\beta/2} \frac{\sqrt{\pi/f_0}}{(k-1)!2^k}
			\left(1+\frac{(2k-3)^2\log\log t 
			}{2 \log t}
			\right)
			+ 
			O\big(\frac{1}{\log t}\big) 
			& \mbox{ if } k\ge 3.
		\end{cases}
		\label{e:EFlim}
	\end{align}
\end{prop}

\begin{proof} 

	{\bf Case 1: $A$ has a $C^{1,1}$ boundary and $\overline{A^o}=A$.}
%
%
%
%
	We first estimate the contribution to \eqref{e:E[F_n]}
from the bulk. By
	\eqref{e:PoMass},
	for $x\in A^{(-r_t)}$ we have 
$$
p_{t}(x) = \frac{1}{(k-1)!} e^{ - tf_0\pi r_t^2}
(tf_0\pi r_t^2)^{k-1} \Big(1+ \frac{k-1}{tf_0\pi r_t^2} + O(\frac{1}{(tr_t^2)^2}) \Big),
$$
 where the O term is $0$ when $k=1$ or $k=2$.
 Also by \eqref{e:rboth}, for $t$ large
 we have $e^{-tf_0 \pi r_t^2}= e^{-\beta}t^{-1}
	(\log t)^{(3-2k)\1_{\{k \geq 2\}}}$. Hence for $k \ge 2$ and $x \in A^{(-r_t)}$
 we have
 \begin{align*}
 p_{t}(x) = (t^{-1} e^{-\beta}/(k-1)!) (\log t)^{3-2k} (\log t +
 (2k-3)\log \log t + \beta)^{k-1} \\
	 \times (1+ (k-1)(\log t)^{-1}(1+ O((\log \log t)/\log t))),
 \end{align*}
	while if $k=1$ and $x \in A^{(-r_t)}$ then $p_t(x) = e^{-\beta}t^{-1}$.
	Hence, since $|A^{(-r_t)}| = |A_t| + O(r_t)$,
\begin{align} 
	\int_{A^{(-r_t)}} p_{t}(x) tdx
	=\begin{cases}
		e^{-\beta} |A| (1+ O(r_t)) & \mbox{ if }  k=1, \\
		e^{-\beta} |A| \big(1+\frac{\log\log t}{\log t} + 
		\frac{\beta + 1}{\log t}  + O( \frac{\log \log t}{(\log t)^2})
		\big ) & \mbox{ if } k=2, \\
		\frac{e^{-\beta}}{(k-1)!(\log t)^{k-2}} + O(\frac{\log \log t}{
			(\log t)^{k-1}})& \mbox{ if } k\ge 3.
	\end{cases}
	\label{e:2dbulk}
\end{align}

	For the contribution from the moat, we use Lemma \ref{l:moat}.
	Note that $c_{2,k} = \frac{\pi^{1/2}}{(k-1)!2^k}$.
	By
	\eqref{e:prisint},
\begin{align}
	t \int_{\mathsf{Mo}_t}p_{t}(x) dx  = 
	\begin{cases}
		 \frac12 |\partial A|  e^{-\beta/2} \sqrt{\pi/f_0} ((\log t)^{-1/2} + 
		 O((\log t)^{-3/2})) & {\rm if}~ k=1, \\
		 |\partial A|  e^{-\beta/2} \frac{\sqrt{\pi/f_0}}{(k-1)!2^k}
		 \Big(1 + \frac{(2k-3)^2\log\log t
		 }{2 \log t } \Big) + 
		 O\big(\frac{1}{\log t} \big) 
		 & {\rm if}~ k\ge 2.
	\end{cases}
	\label{e:2dmoat}
\end{align}
Combining this with (\ref{e:2dbulk}),
and
using (\ref{e:E[F_n]}), yields (\ref{e:EFlim}) in Case 1. \\

{\bf Case 2: $A$ is polygonal.} In this case,
the contribution of the bulk $A^{(-r_t)}$ to the integral on the right
hand side of (\ref{e:E[F_n]}) can be dealt with just as in Case 1; that is,
(\ref{e:2dbulk})
remains valid in this case.

Let $|\Phi_0(A)|$ denote the number of corners of $A$
and enumerate the edges and corners of $A$ in some arbitrary order.
For $1 \leq i \leq |\Phi_0(A)|$,
let $\mathsf{Rec}_{t,i}$
denote a rectangular region in $A$  having as its base part
of the $i$th edge of $A$. 
We take each rectangle to have width $r_t$ and
each end of each rectangle to  be distant $K r_t$ 
from the corner of $A$ at the corresponding end of the corresponding
edge of $A$, with $K$ chosen large enough so that $K> 3/|\sin(\alpha/2)|$
for each angle $\alpha $ of the polygon $A$, as shown in 
Figure \ref{f:polyrecs}.
This choice of $K$ ensures that  the  rectangular
regions are pairwise disjoint.

	\begin{figure}[!h]

\center
\includegraphics[width=5cm]{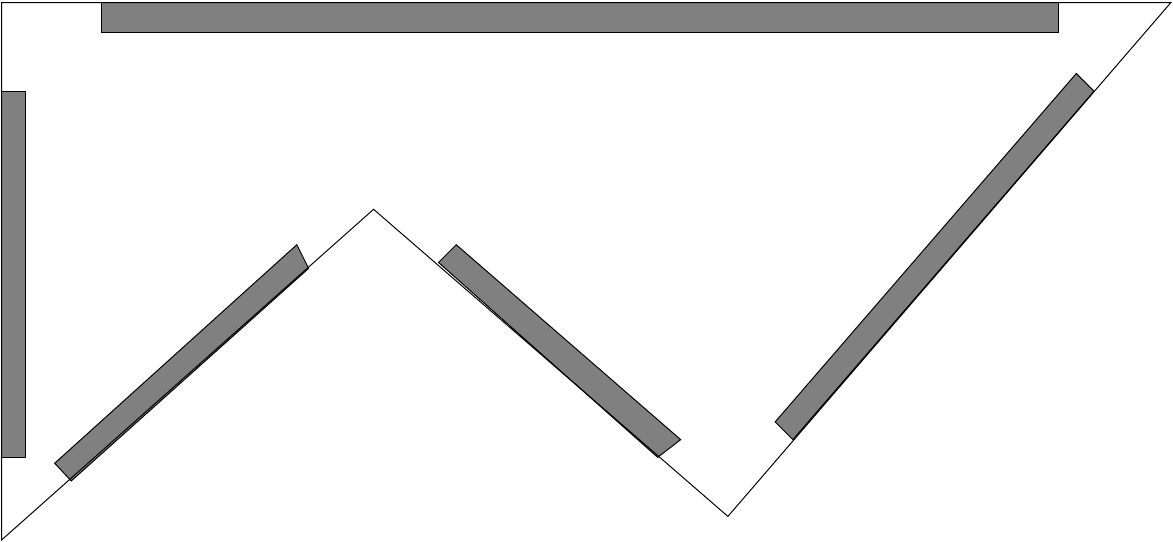}
		\caption{\label{f:polyrecs}Illustration showing
		the rectangles $\mathsf{Rec}_{t,i}$ (shaded) in the proof of
		Proposition \ref{p:average2d}, Case 2.
        }
\end{figure}

We define the corner region $\mathsf{Cor}_{t} := \mathsf{Mo}_t \setminus \cup_{i=1}^{|\Phi_0(A|} 
\mathsf{Rec}_{t,i}$. 
Let 
$\mathsf{Cor}_{t,i}$ denote the intersection of $A$ with  the disk of
radius $(K+1)r_t$ centred on the $i$th corner of $A$. 
Then $\mathsf{Cor}_t \subset \cup_{i=1}^{|\Phi_0(A)|}\mathsf{Cor}_{t,i}$.

Then there exists 
$\kappa  >0 $ (depending on the sharpest angle of $A$), such that
for all  large $t$  and any $i \leq |\Phi_0(A)|$,
$x\in \mathsf{Cor}_{t,i}$, we have
$p_t(x)\le e^{- tf_0 \pi r_t^2 \kappa}$ and
hence by \eqref{e:rboth}, $p_t(x) \leq t^{-\kappa}$ (note that the
coefficient of $\log \log t$ in \eqref{e:rboth} is positive in this case).
Each $\mathsf{Cor}_{t,i}$ has area at most $\pi ((K+1)r_t)^2$, and hence
\begin{align}
	t f_0 \int_{\sf{Cor}_{t,i}} p_t(x)dx 
	= O(    t r_t^2 t^{-\kappa} )
	= O( (\log t) t^{-\kappa}),
	\label{e:Corest}
\end{align}
leading to a total contribution 
from the corners to (\ref{e:E[F_n]})  
of  $O((\log t)t^{-\kappa })$.
Since there are a fixed number of corner regions,
so the total contribution of these regions to the integral on
the right hand side of 
(\ref{e:E[F_n]})  is $O((\log t)t^{-\kappa })$.

By the proof of Lemma \ref{l:moat}
we see that the total contribution of the
rectangles $\mathsf{Rec}_{t,i}, 1 \leq i \leq |\Phi_0(A)|$,
to the right hand side of 
(\ref{e:E[F_n]})  is the same as \eqref{e:2dmoat}.
There is an extra multiplicative error term of $O(r_t)$ due to 
the total length of the rectangles being less than the
perimeter of $A$, but this error term is dominated by
the error terms already included in \eqref{e:2dmoat}.

Putting together these estimates yields (\ref{e:EFlim}) in Case 2.
\end{proof}

\subsection{Proof of theorems \ref{thsmoothgen} and \ref{t:dhi}}

\begin{proof}[Proof of Theorem \ref{thsmoothgen}]
	Recall $f_0 := 1/|A|$.
	Suppose $(r_t)_{t>0}$ satisfies
	the case $d=2,k=1$ of \eqref{e:rboth}, so that
	$t \pi f_0 r_t^2 - \log t = \beta$
	for all large enough $t$.
	Then by
	\eqref{e:defgamma},
	Proposition \ref{p:average2d}
	and
	\eqref{e:defsigA}, 
	\begin{align*}
		\gamma_t(A):=
		t f_0 \E[|V_{t,r_t}|] =  e^{-\beta} +
		(\sigma_A e^{-\beta/2} \pi^{1/2}/2) (\log t)^{-1/2}
		+ O((\log t)^{-3/2}).
	\end{align*}
	Hence by Lemma \ref{l:Poapprox},
	\begin{align*}
		\Pr[R'_{t,\tau t} \leq r_t]
		= \exp \big( - \tau e^{-\beta} -  \tau 
		(\sigma_Ae^{-\beta/2} \pi^{1/2}/2)
		(\log t)^{-1/2} \big) + O((\log t)^{-1}),
	\end{align*}
	and hence \eqref{eqmain3}. Also by Lemma  \ref{l:Rmeta},
	setting $\tau_n = m(n)/n$ we have
	\begin{align*}
		\Pr[R_{n,m } \leq r_n]
		= \exp \big( - \tau_n e^{-\beta} -  \tau_n 
		(\sigma_Ae^{-\beta/2} \pi^{1/2}/2)
		(\log n)^{-1/2} \big) + O((\log n)^{-1}),
	\end{align*}
	and hence \eqref{0829b}.
\end{proof}

\begin{proof}[Proof of Theorem \ref{t:dhi} for $d= 2$]
	Take $d=2, k \geq 2$. Let $\beta \in \R$ and 
define	$(r_t)_{t > 0}$
	by \eqref{e:rboth},
	so that
	$t \pi f_0 r_t^2 - \log t 
	+(3-2k) \log \log t = \beta$ for $t $ large.
	
	First suppose $k=2$.  By \eqref{e:defgamma}
	and Proposition \ref{p:average2d},
	\begin{align*}
		\gamma_t(A) =
		t f_0 \E[|V_{t,r_t,k}|]=  e^{-\beta}
		+ \sigma_A e^{-\beta/2} (\pi^{1/2}/4)
		\Big( 1 + \frac{\log \log t}{2 \log t} \Big) 
		+ O((\log t)^{-1}).
	\end{align*}
	Hence by Lemma \ref{l:Poapprox},
	\begin{align*}
		\Pr[R'_{t,\tau t,k} \leq r_t] =
		\exp \Big(- \tau
		 e^{-\beta}
		 -\tau \sigma_A e^{-\beta/2} (\pi^{1/2}/4)
		\Big(1 + \frac{\log \log t}{2 \log t}\Big) 
		\Big) + O((\log t)^{-1}) ,
	\end{align*}
	and hence \eqref{e:Polim22}.
	Also by Lemma \ref{l:Rmeta}, with $\tau_n= m(n)/n$,
	\begin{align*}
		\Pr[R_{n,m(n),k} \leq r_n] =
		\exp \Big(- \tau_n e^{-\beta}
		 -\tau_n \sigma_A e^{-\beta/2} (\pi^{1/2}/4)
		\Big(1 + \frac{\log \log n}{2 \log n} \Big) 
		\Big)
		+ O((\log n)^{-1}) ,
	\end{align*}
	and hence \eqref{e:lim22}.

	Now suppose $k \geq 3$.  By \eqref{e:defgamma}
	and Proposition \ref{p:average2d},
	\begin{align*}
		\gamma_t(A) = 
		 \frac{\sigma_A e^{-\beta/2} \pi^{1/2}}{(k-1)!2^k}
		\Big(1 + \frac{(2k-3)^2\log \log t}{2 \log t}\Big) 
		+ O((\log t)^{-1}).
	\end{align*}
	Hence by Lemma \ref{l:Poapprox},
	\begin{align*}
		\Pr[R'_{t,\tau t,k} \leq r_t] =
		\exp \Big(- 
		 \frac{\tau \sigma_A e^{-\beta/2} \pi^{1/2}}{
			 (k-1)!2^k}
			 \Big(1 + \frac{(2k-3)^2 \log \log t}{2 \log t}\Big) 
		\Big)
		+ O((\log t)^{-1}),
	\end{align*}
	and hence \eqref{e:Polimgen} for $d=2, k \geq 3$
	(note that $c_{2,k} = (\pi^{1/2}) / ((k-1)!2^k)$ by \eqref{e:defcdk}).
	Also by Lemma \ref{l:Rmeta}, setting $\tau_n := m(n)/n$,  we have
	\begin{align*}
		\Pr[R_{n,m(n),k} \leq r_n] =
		\exp \Big(
		 -\frac{\tau_n \sigma_A e^{-\beta/2} \pi^{1/2}}{
			 (k-1)!2^k}
		\Big(1 + \frac{(2k-3)^2 \log \log n}{2 \log n} \Big) 
		\Big)
		+ O((\log n)^{-1}) 
		,
	\end{align*}
	and hence \eqref{e:limgen} for $d=2, k \geq 3$.
\end{proof}

\begin{proof}[Proof of Theorem \ref{t:dhi} for $d\geq 3$]

	Assume $d \geq 3, k \geq 1$. Let $\beta >0$ and 
	let $(r_t)_{t >0}$ satisfy \eqref{e:rboth},
	so $t f_0 \theta r_t^d = (2-2/d) \log t
	+(2k-4 +2/d) \log \log t + \beta$ for $t$ large.
	Then by Proposition \ref{p:average3d+},
	we have \eqref{e:EFconv}, and hence
\begin{align*}
	\gamma_t(A) =	t f_0 \EE[|V_{t,r_t,k}|] & =   e^{-\beta/2} c_{d,k} \sigma_A
	\Big(1 + \frac{ (k-2 + 1/d)^2  
	\log \log t}{(1-1/d)\log t}   
	\\ 
	& + \frac{ (k-2 + 1/d)  \beta + 4k -4}{(2-2/d)
	\log t} 
	\Big)
	+ O\big( (\log t)^{\eps -2} \big). 
\end{align*}
	Hence by Lemma \ref{l:Poapprox} we have
	\begin{align}
		\Pr[R'_{t,\tau t,k} \leq r_t] 
		& = \exp \Big( - \tau   e^{-\beta/2} c_{d,k} \sigma_A
		\Big(1 + \frac{ (k-2 + 1/d)^2  
		\log \log t}{(1-1/d) \log t}  
		\nonumber
	\\ 
		& + \frac{ (k-2 + 1/d) \beta + 4k -4}{(2-2/d)
		\log t}  
		\Big) \Big)
		+ O\big((\log t)^{\eps -2} \big), 
		\label{e:dkPobigdet}
\end{align}
	which gives us \eqref{e:Polimgen},
	and by Lemma \ref{l:Rmeta}, setting $\tau_n = m(n)/n$
	we have 
	\begin{align}
		\Pr[R_{n,m,k} \leq r_t] 
		& = \exp \Big( - \tau_n  e^{-\beta/2} c_{d,k} \sigma_A
		\Big(1 + \frac{ (k-2 + 1/d)^2 
		\log \log n}{(1-1/d) \log n}
	\nonumber \\ 
		& + \frac{ (k-2 + 1/d)  \beta + 4k -4}{(2-2/d)
		\log n} \Big) \Big)  
		+ O\big( (\log n)^{\eps -2} \big),
		\label{e:dkbigdet}
\end{align}
	giving us \eqref{e:limgen}.
\end{proof}

\section{Simulation results and discussion}
\label{sim-section}

\begin{figure}[p]
\center
\includegraphics[width=0.49\textwidth, trim= 0 10 5 20, clip]{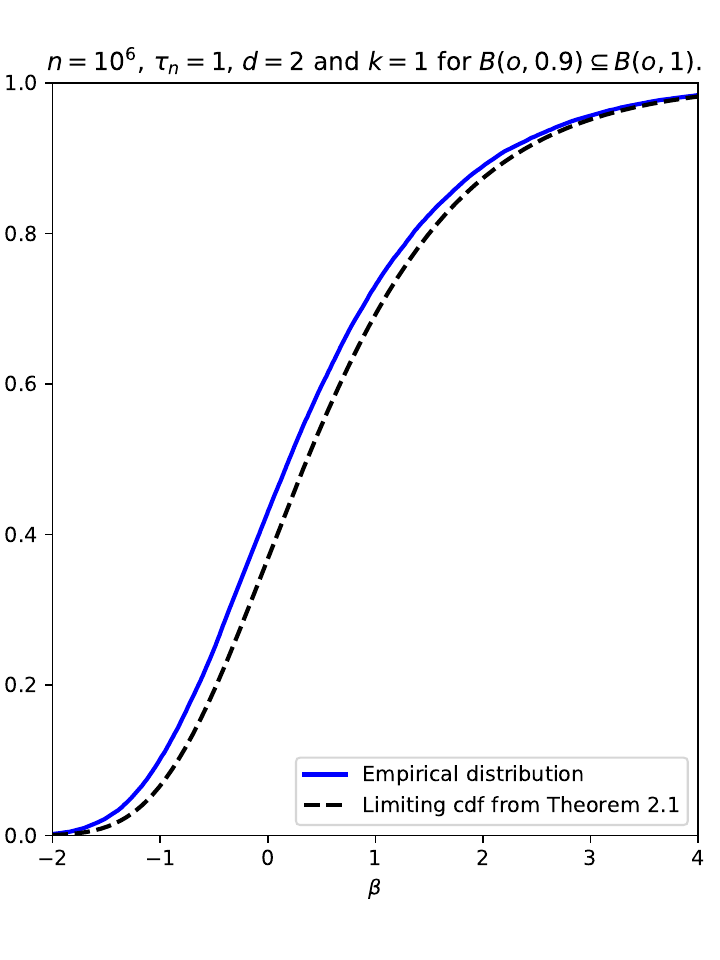}
\includegraphics[width=0.49\textwidth, trim= 0 10 5 20, clip]{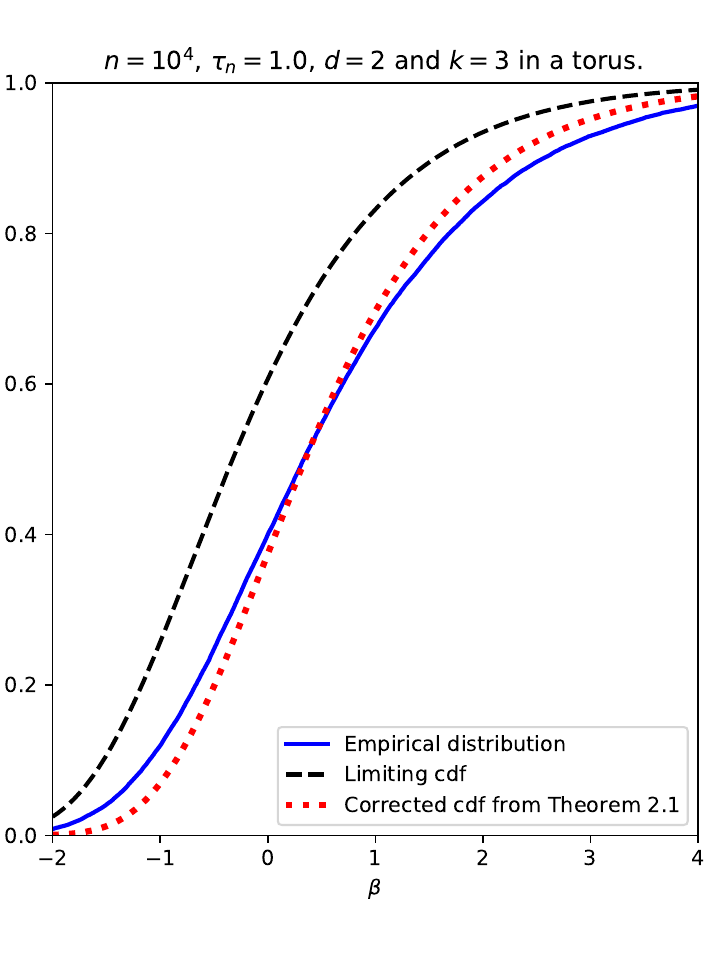}
\includegraphics[width=0.49\textwidth, trim= 0 10 5 20, clip]{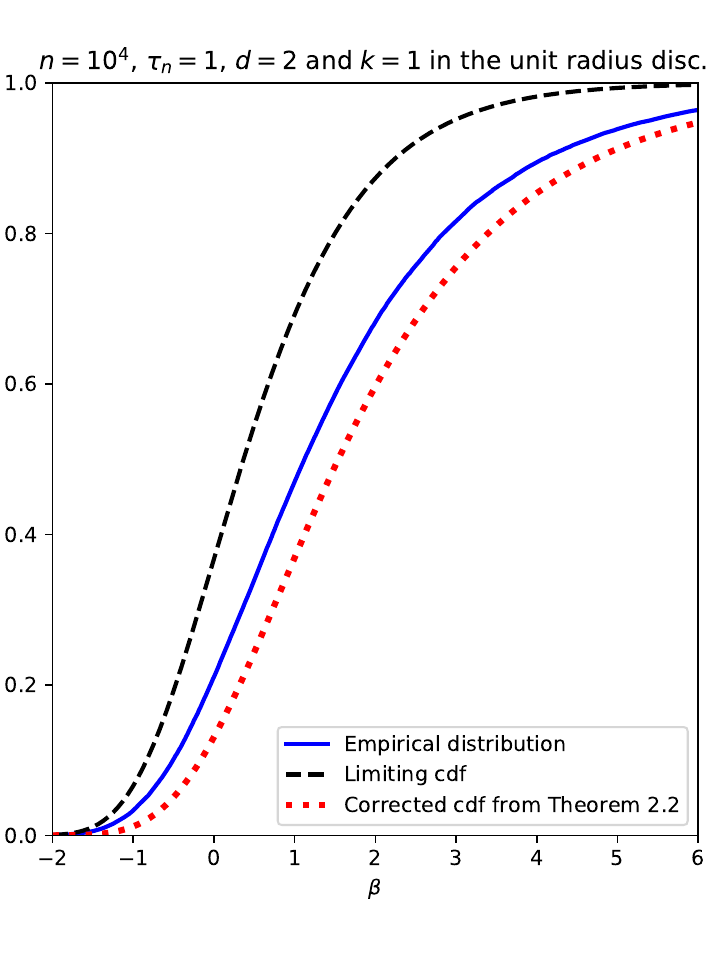}
\includegraphics[width=0.49\textwidth, trim= 0 10 5 20, clip]{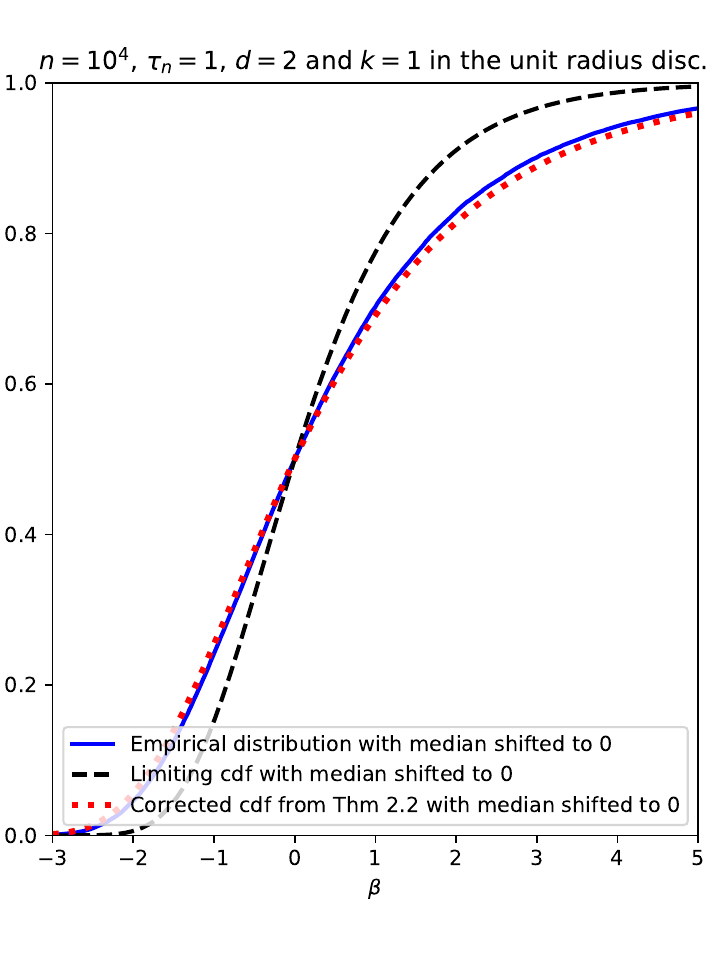}
\caption{\label{simulations1}
The empirical distributions of $n \theta_d f_0 R_{n,m(n),k}^d - c_1 \log n - c_2 \log\log n$
obtained from computer simulations in the settings of Theorems \ref{Hallthm}, \ref{thsmoothgen} and \ref{t:dhi}, plotted on the same axes as the limiting distributions. See Section~\ref{sim-section} for discussion of the simulation results.}
\end{figure}

\begin{figure}[p]
\center
\includegraphics[width=0.49\textwidth, trim= 0 10 5 20, clip]{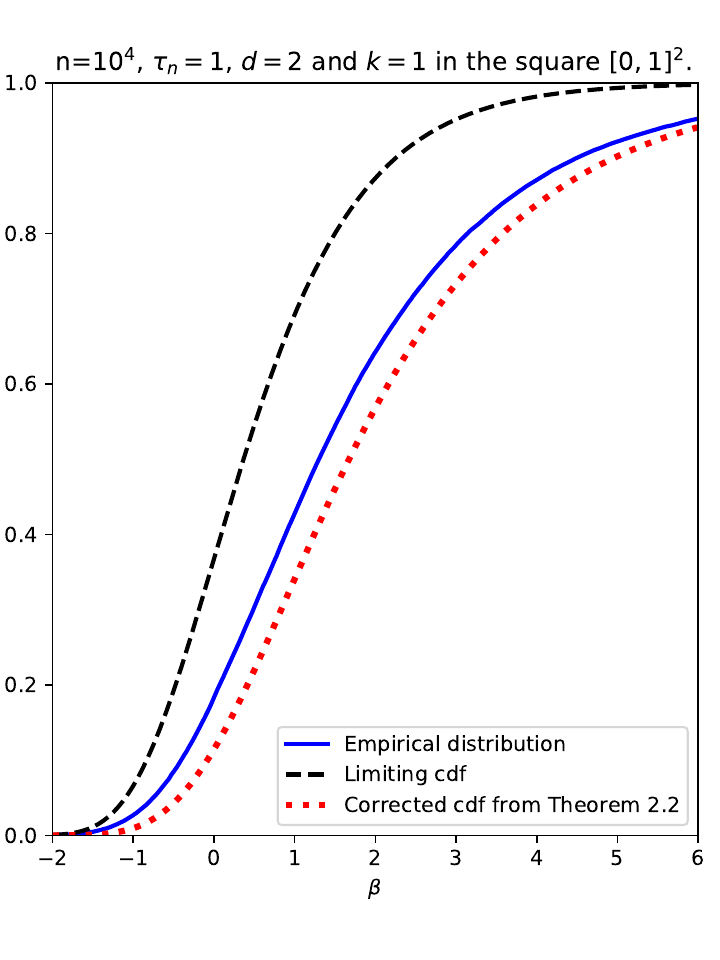}
\includegraphics[width=0.49\textwidth, trim= 0 10 5 20, clip]{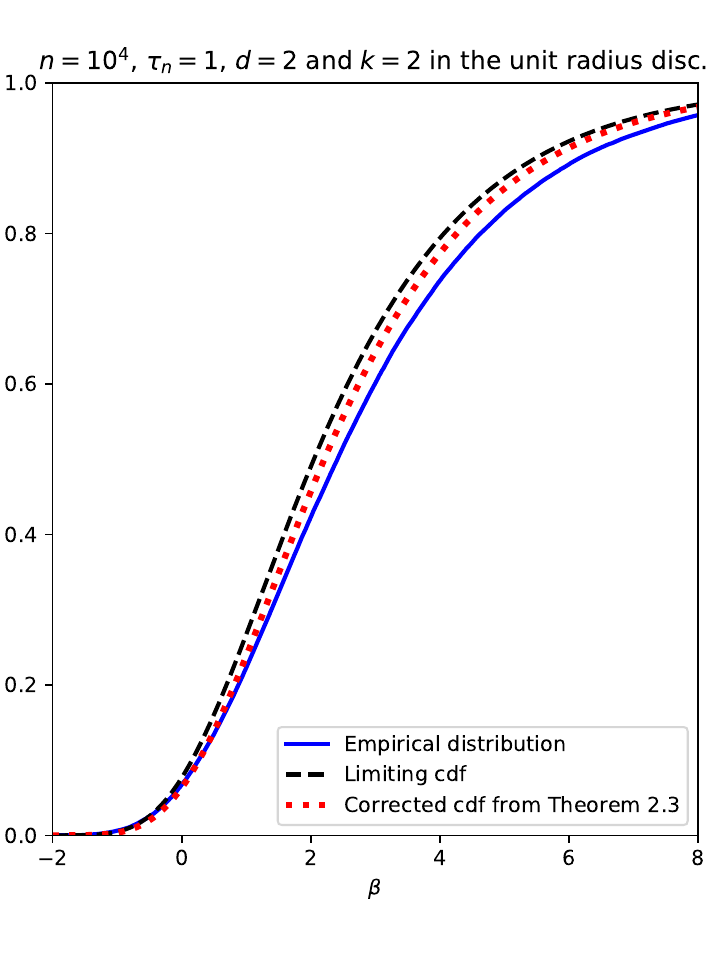}
\includegraphics[width=0.49\textwidth, trim= 0 10 5 20, clip]{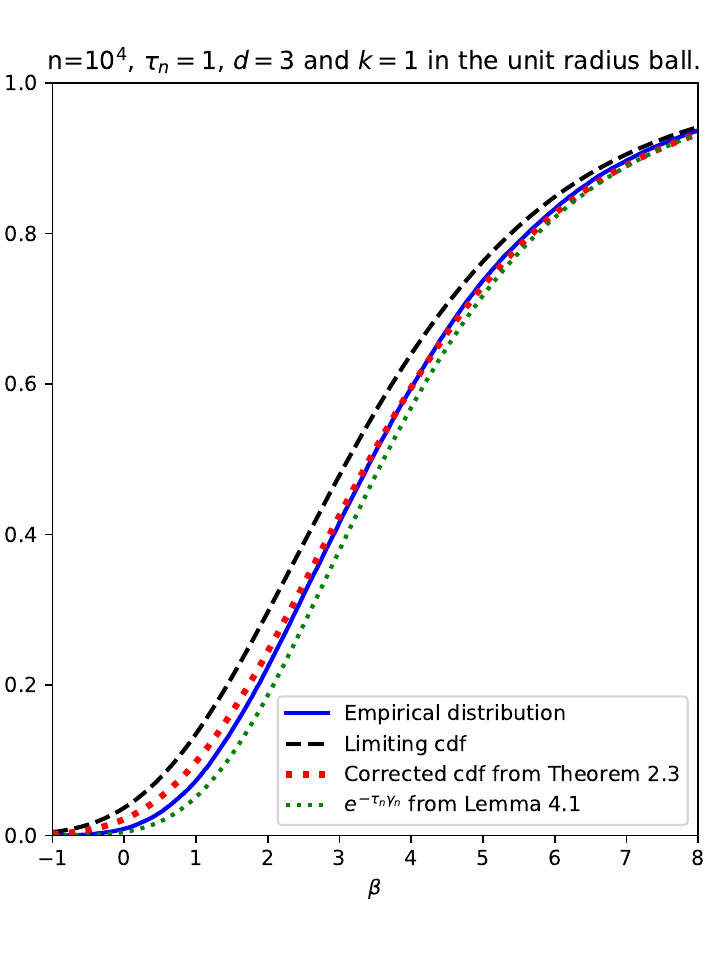}
\includegraphics[width=0.49\textwidth, trim= 0 10 5 20, clip]{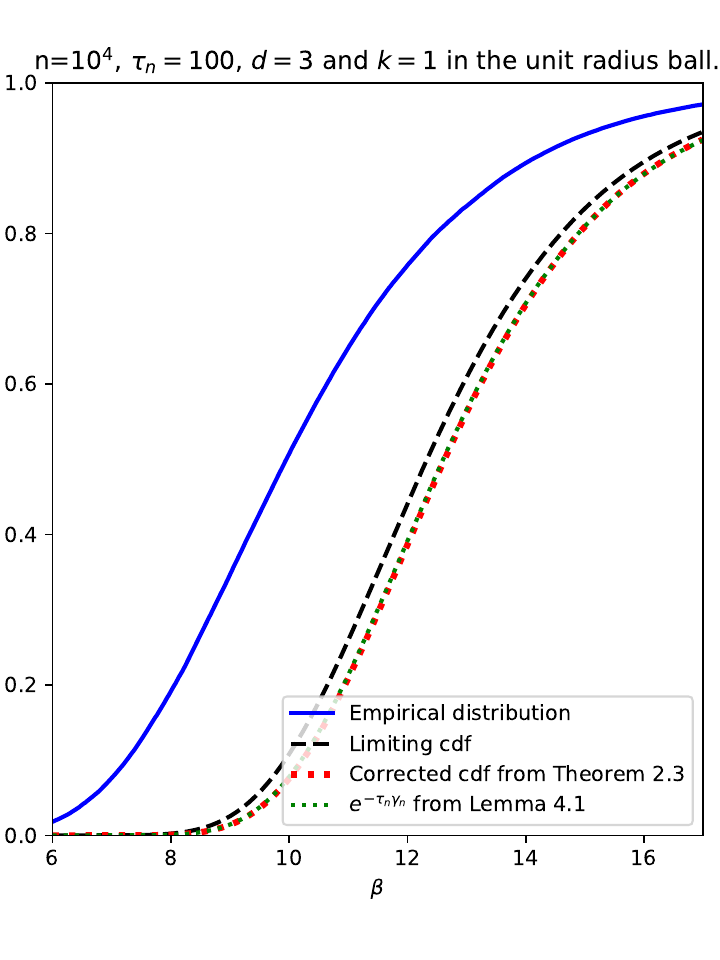}
\caption{\label{simulations2}
The continuation of Figure~\ref{simulations1},
with results from simulations in several more settings.}
\end{figure}

We were able to write computer simulations which
sample from the distribution of $R_{n,m(n),k}$
using a very simple algorithm:
sample $n + m(n)$ independent points
$X_1, \dots, X_n, Y_1, \dots, Y_{m(n)}$.
For each $j \in \{1, \dots, m(n)\}$
let $d_j^{(k)}$ be the Euclidean distance between $Y_j$ and its $k$th-nearest point in $\mathcal{X}_n = \{ X_1, \cdots, X_n \}$.
Then $R_{n,m(n),k} = \max_{j \leq m(n)} d_j^{(k)}$.

In Figures~\ref{simulations1} and \ref{simulations2}, we present the results from simulations of many of the settings for which we have proved limit theorems.
In each of the eight plots, the blue curve is an estimate of the cumulative distribution function of the quantity of the form
$n \theta_d f_0 R_{n,m(n),k}^d - c_1 \log n - c_2 \log\log n$ for which we have obtained weak laws.
These distributions were estimated by sampling several tens of thousands of times from the distribution of $R_{n,m(n),k}$ and plotting the resulting empirical distribution.
The black dashed curves are the corresponding limiting distributions as $n \to \infty$, from Theorems \ref{Hallthm}, \ref{thsmoothgen} and \ref{t:dhi}.
The red dotted curves are the corresponding ``corrected'' distributions, i.e.\ the explicit distributions which occur on the right-hand side of the expressions in our limit theorems, neglecting only the errors of order $(\log n)^{-1}$.\\

The top row in Figure~\ref{simulations1} shows two cases covered by Theorem~\ref{Hallthm}. The top-left diagram is for $B = B(o,0.9)$, $A = B(o,1)$, which meets condition A3. We have $d=2$ and $k=1$, so there is no ``correction'' to the limiting distribution.
This is the only diagram in which we have taken $n$ larger than $10^4$. When plotted for $n = 10^4$ (not pictured), the distance between the empirical distribution and the limiting distribution appears to be smaller than for $n = 10^6$, but the shapes of the curves are very different, indicating that there is still a boundary effect influencing the distribution of the two-sample coverage threshold.

The top-right diagram is for our results when points are placed on the 2-dimensional unit torus. As a remark following Theorem~\ref{Hallthm} states, the proof of that theorem would generalise to this setting, giving exactly the same result. We have simulated $R_{n,m(n),k}$ for $k=3$, which is a case covered by our Theorem~\ref{Hallthm} but not included in the results of \cite{IY}.\\

Both diagrams on the bottom row of Figure~\ref{simulations1} are representations of the same simulation,
with points placed inside the two-dimensional unit disc, which certainly has a smooth boundary.
The inclusion of the explicit term of order $(\log n)^{-1/2}$
improves the accuracy of the estimated distribution considerably, as can be seen from the fact that the red dotted curve in the left diagram is much closer to the empirical distribution than the black dashed curve. In this $d=2$, $k=1$ setting the correction is of a larger order than the $O( \frac{\log\log n}{\log n} )$ terms in all of the other settings.

We remarked after stating Theorem~\ref{thsmoothgen} that $n \pi f_0 R_{n,m(n)}^2 - \mu(n \pi f_0 R_{n,m(n)}^2) \tod \Gum_{\log\log 2,1}$,
where $\mu(\cdot)$ is the median and $\Gum_{\log\log 2,1}$ is a Gumbel distribution with scale parameter 1 and median 0.
To illustrate this, in the second diagram on the second row of Figure~\ref{simulations1} we have translated all of the curves from the first diagram so that they pass through $(0,1/2)$,
i.e. so that they are the distributions of random variables with median $0$.
We can see that the corrected distribution is very close to the empirical distribution from the simulation, indicating that the \emph{shape} of the corrected limiting distribution closely matches the actual distribution of $n \pi f_0 R_{n,m(n)}^2$ for finite $n$, but with an offset corresponding to the difference between $\log n$ and the median of $n \pi f_0 R_{n,m(n)}^2$.

In the setting of Theorem~\ref{thsmoothgen},
the presence of a boundary has an effect on the distribution of $n \pi f_0 R_{n,m(n)}^2$ which disappears as $n \to \infty$, so is not reflected in the limit. Broadly speaking, the terms involving $e^{-\beta}$ come from the interior, and terms involving $e^{-\beta/2}$ come from the boundary. Our correction term corrects the shape of the distribution to account for these boundary effects.

The blue curve in the left-hand diagram was translated by the \emph{sample} median in order to pass through $(0,1/2)$ in the right-hand diagram. However, for applications of these limit theorems to real data, it is unlikely that tens of thousands of independent
samples of $R_{n,m}$ will be available to estimate the median of the distribution.\\

Theorem~\ref{thsmoothgen} covers two cases for $d=2$, $k=1$: when $A$ has a smooth boundary, and when $A$ is a polygon.
The first diagram in Figure~\ref{simulations2}
is in this latter case, with $A = [0,1]^2$.
If we compare this diagram with the bottom-left diagram of Figure~\ref{simulations1},
which is also for $d=2$, $k=1$ but with $A = B(o,1)$,
all of the same qualitative features can be observed:
a fairly large gap between the empirical distribution and the limit, a large improvement due to the correction, and an ``overshoot'' so the corrected distribution approximates the empirical distribution from the right-hand side while the limiting distribution is to the left.

This indicates that the behaviour of the two-sample coverage threshold (at least in two dimensions) is not strongly affected by the presence of ``corners'' on the boundary of $A$. It is likely that in higher dimensions, the limiting behaviour of $R_{n,m(n)}$ when $A$ is a polytope would be different from the behaviour when $A$ has a smooth boundary, as was observed for the \emph{coverage threshold} in \cite{Pen22}.

The top-right diagram in Figure~\ref{simulations2} is for $d=2$, $k=2$ with points inside the unit disc, which is the setting of the first limit result in Theorem~\ref{t:dhi}. The $d=2$, $k=2$ case is unique in that the limiting distribution has two terms, corresponding to the boundary and interior. In the other settings the limiting distribution for the position of the point in $\mathcal{Y}_j$ which is last to be $k$-covered as the discs expand is either distributed according to Lebesgue measure on $A$, or according to a distribution supported on $\partial A$. However, the existence of both terms in the limit in \eqref{e:lim22} indicates that for $d=k=2$, the ``hardest point to $k$-cover'' has a mixed distribution: the sum of a measure supported on the interior of $A$ with a measure supported on $\partial A$.\\

The bottom row of Figure~\ref{simulations2} contains the distributions from two simulations with $d=3$ and $k=1$ inside the unit ball.
In the left diagram we have taken $\tau = 1$, and the corrected limit approximates the empirical distribution well. In the right diagram  we have taken $\tau = 100$. The empirical distribution is extremely far from the limiting distribution, and the correction term has the wrong sign, so the corrected limit is an even worse approximation to the empirical distribution than the uncorrected limit is.

The fact that the empirical distribution is far to the \emph{left} of the limit (i.e.\ that $R_{n,m(n),k}^d$ is generally smaller than the limit would predict) when $\tau$ is large is rather surprising. If we consider $\tau_n \uparrow \infty$ sufficiently fast as $n \to \infty$, then $R_{n,m(n),k}$ should approximate the \emph{coverage threshold} considered in \cite{Pen22}.
As we remarked after the statement of Theorem~\ref{t:dhi}, the coverage threshold is generally much larger than our $R_{n,m(n),k}$. In the case $d=3$, $k=1$, the coefficient of $\log\log n$ in the weak law for the coverage threshold corresponding to Theorem~\ref{t:dhi} is larger, and so we might expect that if $\tau$ is large than the empirical distribution for $n \theta_d f_0 R_{n,m(n),k}^d - (2-2/d)\log n - (2k-4+2/d)\log\log n$ in Figure~\ref{simulations2} would be far to the right of the limiting distribution.

To explain the surprising fact that it is instead far to the left of the limit, we should examine Lemma~\ref{l:Poapprox}. In the ``Poissonized'' setting of that Lemma, given the configuration of ``transmitters'' $\mathcal{X}_t$, the conditional probability $\P[R_{t,\tau t,k}' \leq r_t | \mathcal{X}_t]$ is the probability that no point from $\mathcal{Y}_{\tau t}$ lies in the vacant region $V_{t,\tau t,k}$.
The lemma shows that when we replace the marginal probability $\P[R_{t,\tau t,k}' \leq r_t]$ with the probability that no point from $\mathcal{Y}_{\tau t}$ lies in a region of Lebesgue measure $\E V_{t,\tau t, k}$, the error induced is $O( (\log t)^{1-d} )$.
However, this is for fixed $\tau$. It can be seen from the proof that the error is proportional to $\tau^2 (\log t)^{1-d}$,
which is not negligible unless $t$ is very large compared to $\tau$.

To see why the corrected limiting cdf is \emph{below}
the empirical cdf, let $f(x) := e^{-\tau x}$.
In Lemma~\ref{l:Poapprox},
if $\Gamma_t := (t / |B|) |V_{t,r_t,k} \cap B |$,
then $\P[ R_{t,\tau t,k}'(B) \leq r_t ] = \E [f( \Gamma )]$,
while $e^{-\tau \gamma_t(B)} = f( \E[\Gamma] )$.
Hence by Jensen's inequality, $e^{-\tau \gamma_t}$
can only ever be an \emph{underestimate}
for $\P[ R_{t,\tau t,k}'(B) \leq r_t ]$,
with an error proportional to $\tau^2$.
All of our corrected expressions in Theorem~\ref{t:dhi}
are approximations of $e^{-\tau \gamma_t}$.

If we think of $\mathcal{X}_n$ as a set of transmitters and $\mathcal{Y}_{m(n)}$ as a set of receivers, then for most applications we would expect $\tau_n$ to be large. It should be possible to improve the estimate in this case by computing the leading-order error terms in Lemma~\ref{l:Poapprox}, using moments of $t|V_{t,\tau t,k}|$ or otherwise.\\

{\bf Acknowledgements.}
We thank Keith Briggs for suggesting this problem,
and for some useful discussions regarding simulations. \\

{\bf Funding Declaration.} All three authors
were supported in doing this research by 
EPSRC grant EP/TO28653/1.\\


{\bf Availability of Data and Materials declaration.}
The code for the simulations discussed in Section~\ref{sim-section} is available at 
\url{https://github.com/frankiehiggs/CovXY}
and the samples generated by that code are available at 
\url{https://researchdata.bath.ac.uk/id/eprint/1359}. \\


\end{document}